\newcommand{\suml}{\sum\limits}
\newcommand{\supl}{\sup\limits}
\newcommand{\infl}{\inf\limits}
\newcommand{\maxl}{\max\limits}
\newcommand{\minl}{\min\limits}
\newcommand{\intl}{\int\limits}
\newcommand{\liml}{\lim\limits}
\newcommand{\Leb}{\mathrm{Leb}}
\newcommand{\re}{\mathop{\mathrm{Re}}}
\newcommand{\df}{\overset{\mathrm{def}}{=}}
\newcommand{\eqas}{\overset{\mathrm{a.s.}}{=}}
\newcommand{\eqd}{\overset{\mathrm{d}}{=}}
\newcommand{\prob}[2][]{\mathcal{P}_{#1}\! \ifstrempty{#2}{}{\left\{#2\right\}}}
\newcommand{\mexp}[2][]{\mathbb E_{#1}\! \left[#2\right]\!}
\newcommand{\limninf}{\underset{n \to \infty}{\longrightarrow}}
\newcommand{\limundninf}{\lim\limits_{n \to \infty}}
\newcommand{\limninfprob}{\overset{\mathcal{P}}{\underset{n \to \infty}{\longrightarrow}}}
\newcommand{\limninfdist}{\overset{\mathrm{d}}{\underset{n \to \infty}{\longrightarrow}}}
\newcommand{\limninffidi}{\overset{\mathrm{fidi}}{\underset{n \to \infty}{\longrightarrow}}}
\newcommand{\limninfms}{\overset{\mathrm{m.s.}}{\underset{n \to \infty}{\longrightarrow}}}
\newcommand{\limninfas}{\overset{\mathrm{a.s.}}{\underset{n \to \infty}{\longrightarrow}}}
\newcommand{\id}{\mathrm{id}}
\renewcommand{\phi}{\varphi}
\renewcommand{\epsilon}{\varepsilon}
\renewcommand{\kappa}{\varkappa}
\begin{document}

\section{Introduction}

Determinantal point processes, being of central importance in contemporary random matrix theory and beyond, exhibit a rich analytical structure and have remained an active research topic in probability and related fields for almost half a century. One common way of specifying probability laws of point processes is by prescribing distributions of the corresponding linear (or additive) \textit{functionals}, also referred to as \textit{statistics}. This status, and the fact that often in practice statistics are the only quantities that can be observed directly, renders them the objects of intrinsic mathematical and applied interest.

While linear statistics of one-dimensional determinantal point processes have been extensively studied, and the corresponding theory includes a multitude of deep results, the same cannot yet be said about their multidimensional cousins (e.g., see the survey~\cite{F2023} by P. J. Forrester and references therein). This fact can be attributed to the immanent difficulty due to a much more involved analytical and algebraic structure of the objects in question. Perhaps the simplest subclass of the processes, still rich enough to demonstrate non-trivial behavior, consists of those possessing \textit{radial}, or \textit{rotational}, symmetry. To take full advantage of the symmetry, the statistics should also enjoy this property. We refer to~\cite{ABE2023, AKS2023, BC2022, BS2023, C2022, CL2023, FL2022, HM2013, S2020} and references therein for related studies and discussions.

Our primary attention will be directed towards a specific representative of the subclass characterized above, a two-dimensional radially-symmetric process known as the \textit{Mittag-Leffler ensemble}, more specifically its \textit{constrained} version. The Mittag-Leffler ensemble, named so because of the appearance of the Mittag-Leffler function in its limiting kernel (e.g., see Y.~Ameur, N.-G.~Kang, S.-M.~Seo~\cite{AKS2023}), is a natural extension of the Ginibre ensemble which allows for both light- and heavy-tailed potentials (for more information see, e.g., S.-S.~Byun and P.~J.~Forrester~\cite{BF2022} and the references therein). The \textit{constrained} ensemble features the fact that the particles are conditioned to stay in a given subdomain of the droplet. This is often illustrated by placing a hard wall along the boundary of the subdomain. Such models appear for their own sake in, e.g., S.-M.~Seo~\cite{S2022} and~\cite{S2020}, as well as to avoid the divergence problems for more general than the Mittag-Leffler potentials in P.~Elbau and G.~Felder~\cite{EF2005}. The presence of a hard wall has a drastic effect on the behavior of the particles. This can already be seen at the level of the equilibrium measures, which posses a new, singular component at the boundary of the droplet. We will see that that this singular component plays an important role in our results. For more motivation (including from the physical perspective) we refer the reader to the introduction and the references in the paper by Y. Ameur, C. Charlier, and J. Cronvall~\cite{ACC2024}. Specifying further, we will study behavior of rotationally-invariant linear statistics in the \textit{hard-edge} scaling regime, meaning at a distance of order~$O(1/n)$, as~$n \to \infty$, from the hard wall, where~$n$ is the number of particles. We note that other scalings are possible, e.g., the semi-hard scaling concerning the particles at a distance of order~$O(1/\sqrt{n})$ from the boundary. While we believe our methods can be extended for such cases, we do not address this here. 

We draw our inspiration from a recent series of papers~\cite{ACCL2022, ACCL2024, C2022, CL2023}, in which Y.~Ameur, C.~Charlier, J.~Cronvall, and J.~Lenells had been studying counting statistics of the Mittag-Leffler ensemble, along with its constrained version, from the purely asymptotic-analytic perspective. We also mention a relevant paper by C.~Charlier~\cite{C2023}, which analyzes large gap probabilities, a paper by C.~Charlier and S.-S.~Byun~\cite{BC2022}, which studies the characteristic polynomial of the eigenvalue moduli. A common feature of all of these papers is that the quantities of interest, such as the gap probability or the moment generating functions, can be written via the incomplete gamma function~$\gamma(a,z)$. The same observation was used earlier in a paper by P.~J.~Forrester~\cite{F1992}.

One of the results in~\cite{ACCL2022} is the asymptotics of the moment generating functions for the disk counting statistics, with emphasis on detailed precise asymptotics. A direct implication of this result is finite-dimensional central limit theorems, corresponding to different scaling regimes. In contrast, our emphasis will not be on precise asymptotics; instead, we aim at strengthening a particular hard-edge central limit theorem to make it cover a general large class of rotationally-invariant statistics. We will see that only elementary asymptotic analysis is needed for that. Our further objective is to study additive statistics associated with functions supported on the complement of a disk of \textit{varying radius}, dynamical additive statistics. This variability gives rise to a continuous-time stochastic process. We will prove a central limit theorem in the appropriate functional space for this stochastic process, which will also enable us to extract more properties of the underlying linear statistics. We mention related results, those establishing a central limit theorem for linear statistics of the infinite Ginibre ensemble, A.~I.~Bufetov, D.~Garc\'{i}a-Zelada, and Zh.~Lin~\cite{BGZL2022}, a functional central limit theorem for the radius-dependent counting statistics of the infinite Ginibre ensemble, M.~Frenzl and G.~Lambert~\cite{FL2022}, and a functional central limit theorem for the counting statistics of the sine process, A.~I.~Bufetov and A.~V.~Dymov~\cite{BD2019}. We also mention a relevant recent preprint by M.~Allard, P. J.~Forrester, S.~Lahiry, and B.~Shen~\cite{AFLS2025}, which was brought to our attention by one of the referees.

We note that morally the emergence of a Gaussian limit for linear statistics of determinantal point processes would not be surprising as it aligns with the well-known general theorem established by A.~B.~Soshnikov~\cite{S2000}. However, this theorem relies on the assumption that the variance grows to infinity, which may not hold, and even if it does, verifying this can be challenging. Additionally, there are known cases when the limit is not Gaussian at all (e.g., see~\cite{AC2024}). Verifying the assumption is especially hard if the kernel of the corresponding determinantal point process has a complicated structure. Besides, since Soshnikov's theorem is intended for general determinantal point processes and is established by the method of moments (cumulants), its application does not illuminate the probabilistic mechanism behind the convergence to the Gaussian law. Due to this, instead of appealing to Soshnikov's theorem, we embrace a remarkable fact, first discovered in a particular case of the Ginibre ensemble by E.~Kostlan in~\cite{K1992} and later extended to general rotationally-invariant determinantal point processes by J.~Ben Hough, M.~Krishnapur, Y.~Peres, and B.~Vir\'{a}g in~\cite{BHKPV2006}, that the radial behavior of particles in rotationally-invariant ensembles coincides with that of independent random variables, however, not identically distributed. This identification is part of the reason why the analysis in~\cite{ACCL2022, ACCL2024, C2022, C2023, BC2022, CL2023, F1992} was feasible in the first place. We will see that this perspective also helps to avoid elaborate asymptotic calculations, on one hand, and get more information about the hard-edge behavior of the particles, on the other. Effectively, the problem of studying linear statistics reduces to studying classical objects of probability-- sums of independent random variables. In the radius-varying case, one also discovers a beautiful connection to the theory of empirical processes (see, e.g., D.~Pollard~\cite{P1990book}). We also mention papers by G.~Akemann, J.~R.~Ipsen, and E.~Strahov~\cite{AIS2014}, and by G.~Akemann and E.~Strahov~\cite{AS2013}, which rely on the property described above in the context of products of random matrices.

The major complication in our study is that the independent random variables of interest are not identically distributed; their laws depend on the parameters in an inherently non-trivial way. Moreover, we cannot rely on a simple explicit form of the statistics, as our predecessors did in~\cite{ACCL2022, ACCL2024, BC2022, C2022, CL2023}. To overcome these obstacles we developed an approximation procedure of the independent random variables in question by exponential random variables with intensities spread uniformly over an interval. We couple these objects on the same probability space, which allows us to use simple estimates and bounds for the linear statistics in a manageable way. As a result, we are able to prove a multivariate central limit theorem for rotationally-invariant statistics associated with \textit{bounded measurable functions} and its stronger \textit{functional} version for radius-dependent statistics. Under additional assumptions of regularity of the statistics and their positivity, the latter result extends to yet another functional central limit theorem for the vector stochastic process; the entries of the vector are the linear statistic itself and its first hitting time. We recall that the first hitting ``time'' tells us when the radius-dependent statistic reaches a certain fixed value for the first time as the radius of the disk monotonically increases or decreases (depending on the situation). We will give explicit characterizations of the limiting stochastic processes in all the described cases.

It is also worth pointing out that the covariance functions of the processes in our central limit theorems do not involve derivatives, in stark contrast with those in the theorems from Y.~Ameur, H.~Hedenmalm, and N.~Makarov~\cite{AHM2011, AHM2015} and B.~Rider and B.~Vir\'{a}g~\cite{RV2007}. In particular, we do not find a Gaussian free field, which is, perhaps, not surprising since effectively we are dealing with the radial component of the Mittag-Leffler process only.

We begin by introducing some preliminary concepts and the notation.

\section{Preliminaries}
Fix a probability space~$(\Omega, \mathcal{F}, \mathcal{P})$ together with a family of finite radially-symmetric Borel measures~$\{\mu_n\}_{n \in \mathbb{N}}$ on~$\mathbb{C}$. Let~$\{\mathscr{Z}_n\}_{n \in \mathbb{N}}$ be a collection of determinantal point process on $(\Omega, \mathcal{F}, \mathcal{P})$ given by the kernels
\begin{equation}
  \label{eq:rad_sym_kern}
  K_n(z,w) = \suml_{j=0}^{n-1} a_{n,j}^2 (z \overline{w})^j, \quad n \in \mathbb{N},
\end{equation}
where
\begin{equation}
  \label{eq:rad_sym_kern_eq2}
  a_{n,j}^2 = \left(\intl_{\mathbb{C}} |z|^{2j}\, \mu_n(dz)\right)^{-1}.
\end{equation}
Since each~$K_n$ defines a positive orthogonal projector on~$L_2(\mathbb{C}, \mu_n)$, the existence of the determinantal point process~$\mathscr{Z}_n$ is guaranteed by Macchi--Soshnikov--Shirai--Takahashi's theorem. The process~$\mathscr{Z}_n$ is a particular case of an \textit{orthogonal polynomial ensemble}, and the distribution of (the positions of) the particles can be given explicitly by the formula
 
\begin{equation}
  \label{eq:uncond_measure_Pn}
  \mathbb{P}_n(dz_1, \ldots, dz_n) = \frac{1}{Z_n}\prod_{j < k} |z_k-z_j|^2\, \mu_n(dz_1) \otimes \cdots \otimes\mu_n(dz_n),
\end{equation}
where~$Z_n$ is the corresponding normalizing constant. The expression~\eqref{eq:uncond_measure_Pn} signifies that~$\mathscr{Z}_n$ is also an instance of a normal matrix model, related to non-Hermitian random matrices, or equally, an example of a 2D Coulomb gas. For more information, we refer the reader to the previous section (in particular, see H.~Hedenmalm and N.~Makarov~\cite{HM2013}).

If one specifies~$\mu_n$ in the following manner,
\begin{equation}
	\label{eq:MLmeasure}
	\mu_n(dz) = |z|^{2 \alpha} e^{-n |z|^{2b}} \Leb(dz), \quad \alpha> -1, b>0,
\end{equation}
where~$\Leb$ is the Lebesgue measure on~$\mathbb{C}$, then~$\mathscr{Z}_n$ becomes a \textit{Mittag-Leffler ensemble}. In contrast, if we set
\begin{equation}
  \label{eq:mittag_lef_restr_meas}
  \mu_n(dz) = |z|^{2 \alpha} e^{-n |z|^{2b}}\mathds{1}[|z| \le \rho]\, \Leb(dz),
\end{equation}
where~$\rho>0$, the particles will be forced to stay in the (closed) disk of radius~$\rho$ centered at the origin, and we end up with the \textit{constrained} Mittag-Leffler ensemble.

Recall that according to the law of large numbers for the (unconstrained) Mittag-Leffler ensemble (see~\cite{HM2013}, in particular, Remark~2.10 (iii)), the corresponding normalized empirical measure~$\nu_n$,
\begin{equation}
  \nu_n = \frac{1}{n} \suml_{z \in \mathscr{Z}_n} \delta_{z},
\end{equation}
converges weakly in probability to the equilibrium measure, the measure supported on the disk of radius~$b^{-\frac{1}{2b}}$ centered at the origin (e.g., see~Y.~Ameur, C.~Charlier, J.~Cronvall, and J.~ Lenells~\cite{ACCL2022}), and given by
\begin{equation}
	\frac{1}{\pi} b^2 |z|^{2b-2}\, \Leb(dz).
\end{equation}
Just like in~\cite{ACCL2022}, we focus on the situation when~$\rho \in (0,b^{-\frac{1}{2b}})$, which will correspond to placing a ``hard wall'' in the bulk of the spectrum for the original Mittag-Leffler ensemble. From now on, the symbol~$\mathscr{Z}_n$ will stand for the restricted Mittag-Leffler ensemble (determinantal point process), associated with~\eqref{eq:mittag_lef_restr_meas}, where~$\rho \in (0,b^{-\frac{1}{2b}})$. 

The equilibrium measure corresponding to the restricted Mittag--Leffler ensemble (e.g., see~\cite{ACCL2022}) is supported, as expected, on the disk of radius~$\rho$ centered at the origin and is given by
\begin{equation}
	\frac{1}{\pi} b^2 |z|^{2b-2} \Leb(dz) + \kappa \sigma_\rho(dz), 
\end{equation}
where~$\sigma_\rho(dz)$ is the normalized uniform measure on the circle~$\{|z| = \rho\}$ and~$\kappa = 1 - b \rho^{2b}$. We note the presence of the second term, singular with respect to the first. Correspondingly, the restricted process~$\mathscr{Z}_n$ will have a natural boundary, the circle~$\{|z| = \rho\}$, and we will be considering the hard-edge scaling regime, in which we zoom in on the interface of size~$O(1/n)$ around this boundary. Specifically, transform the process~$\mathscr{Z}_n$ by
\begin{equation}
  \label{eq:def_wn_proc_hard_edge}
  \mathscr{W}_n = - 2 \kappa n \log{\frac{\mathscr{Z}_n}{\rho}},
\end{equation}
where the principal branch of the logarithm is used. 

In~\eqref{eq:def_wn_proc_hard_edge} and in Theorem~\ref{thm:thm1} we choose the constant~$\kappa>0$ just like in the paragraph above. This choice is not arbitrary. On the one hand, it makes the formulas look more transparent; on the other, it is a manifestation of the fact that for the hard-edge scaling, only those particles contribute that ``eventually end up'' on the circle~$\{|z| = \rho \}$. The complementary statement is encoded in Proposition~\ref{lem:behav_part_le1}.

Note that the mapping in~\eqref{eq:def_wn_proc_hard_edge} sends the disk of radius~$\rho$ into the right half-plane, and the hard-edge interface becomes a strip of size~$O(1)$, as~$n \to \infty$, to the right of the imaginary axis.

We mention that one can consider several other scenarios, the semi-hard edge scaling or the bulk scaling regime, or even the case~$\rho = b^{-\frac{1}{2b}}$ (see~\cite{ACCL2022}). We believe all these situations can be analyzed with the same methods that we develop. In particular, we conjecture that for the bulk and semi-hard edge scalings, one has (mixtures) of truncated Gaussian random variables in place of the exponential random variables in Proposition~\ref{lem:behav_part_ge1}. Also, we believe that the approach developed in this paper can be generalized to a larger class of radially symmetric measures~$\mu_n$, not just those of type~\eqref{eq:MLmeasure}. We will leave this as a subject of our future research.

The central object of our study is the (normalized) \textit{time-dependent linear statistic}
\begin{equation}
  \label{eq:additive_stat}
  S_n(t) = \frac{1}{n}\sum_{w \in \mathscr{W}_n} \phi(\re{w}) \cdot \mathds{1}[\re{w} \le t],
\end{equation}
where~$\phi$ is a bounded measurable function. We think of~\eqref{eq:additive_stat} as a continuous-time stochastic process $(S_n(t), t \ge 0)$. It is easy to realize that the parameter~$t$ represents the exponentially transformed radius of the disk in terms of the original ensemble~$\mathscr{Z}_n$.

Recall that~$D[0,L]$ and~$D[0,L)$, where we allow for~$L=+\infty$, are the space of \textit{c\`{a}dl\`{a}g} functions (right continuous with left limits) on~$[0,L]$ and on~$[0,L)$, respectively. In contrast to the compact-interval case, the elements of~$D[0,L)$ do not necessarily have finite left limits at~$L$. Unless stated otherwise, we equip~$D[0,L]$ and~$D[0,L)$ with the topology of~\textit{uniform convergence} and the topology of~\textit{uniform convergence on compact sets}, respectively. For shortness, we will refer to the former as the \textit{uniform topology} and to the latter as the \textit{locally uniform topology}. 

By default, we equip both the spaces with the cylindrical $\sigma$-algebra, as it is customary. Since the spaces are non-separable,  the cylindrical (or the projection) $\sigma$-algebra is not necessarily the same as the Borel $\sigma$-algebra. This somewhat complicates the matter on the technical level (e.g., continuous functions are no longer necessarily measurable) yet still can be resolved, as pointed out by R. M. Dudley~\cite{Dudley1966,Dudley1967} (see also D. Pollard~\cite{P2011book}).

Initially, it may seem natural to consider~$S_n$ as an element of space~$D[0,+\infty]$, since $S_n(+\infty)$ is a well-defined random variable. We will discover further that a more appropriate choice is~$D[0,+\infty)$ (see Remark~\ref{rem:rem3.3}).

If we assume that~$\phi>0$, then~$S_n$ in~\eqref{eq:additive_stat} becomes a non-decreasing stochastic process. It is of interest to understand how far in time~$t$ one needs to go so that the statistic~$S_n(t)$ reaches a certain fixed level for the first time in its history. This leads to a notion of the \textit{first-hitting time}, which is defined as
\begin{equation}
  \label{eq:Qn_def}
  Q_n(h) \df \inf \{s \in [0, +\infty)|\, S_n(s) > h \},\quad h \ge 0.
\end{equation}
If for some~$h$ the set on the right-hand side of~\eqref{eq:Qn_def} is empty, then we assign~$Q_n(h) \df +\infty$. The stochastic process~$(Q_n(h), h \ge 0)$ is right continuous and thus can be considered an element of~$D[0,+\infty)$.

The goal of the subsequent sections is to prove functional limit theorems for~\eqref{eq:additive_stat} and for~\eqref{eq:Qn_def}, as~$n \to \infty$. Section~\ref{sec:FCLT} is devoted to Theorem~\ref{thm:thm1}, a functional limit theorem for~$S_n$. In Section~\ref{sec:FCLT_tightness and fidi} we prove Lemma~\ref{lem:tightness} about tightness, split~$S_n$ into two independent terms, and study each of them separately; finite-dimensional convergence of these terms, after the appropriate shift and normalization, is studied in Section~\ref{sec:beh_Sn1} and Section~\ref{sec:beh_Sn2}. The proof of Theorem~\ref{thm:thm1} is given in Section~\ref{sec:proof_thm1}. Section~\ref{sec:FCLT_hitting_time} presents the formulation and the proof of the vector functional central limit theorem for~$(S_n,Q_n)$, Theorem~\ref{thm:thm2}.

\section{Functional central limit theorem  for linear statistics}
\label{sec:FCLT}
Introduce auxiliary functions,
\begin{equation}
  \label{eq:omegas}
  \omega_1(x) = \frac{e^{x}-1-x}{x^2}e^{-x}, \quad \omega_2(x) = \frac{2(e^{x}-1-x-\frac{x^2}{2})}{x^3}e^{-x}.
\end{equation}
We note for further reference that~$\omega_1$ is a probability density on~$\mathbb{R}_+$. Our first result is the following theorem.
\begin{theorem}[Functional CLT for linear statistics]
  \label{thm:thm1}
  Let~$\{\mathscr{Z}_n\}_{n \in \mathbb{N}}$ be a collection of radially symmetric determinantal point processes defined by~\eqref{eq:rad_sym_kern}--\eqref{eq:uncond_measure_Pn}, with~$\mu_n$ specified in~\eqref{eq:mittag_lef_restr_meas} and~$\rho \in (0, b^{-\frac{1}{2b}})$. Define~$\mathscr{W}_n$ by~\eqref{eq:def_wn_proc_hard_edge} with~$\kappa= 1-b\rho^{2b}$, and consider the linear statistic~\eqref{eq:additive_stat}, where~$\phi$ is a bounded measurable function. Set
  \begin{equation}
    \label{eq:def_mk_omega}
    m_k(t) = \kappa \intl_0^{t} \big(\phi(x)\big)^k \omega_1(x)\, dx, \quad k=1,2;
  \end{equation}
  and
  \begin{equation}
    m_{12}(t_1,t_2) = \kappa \intl_0^{t_1}\intl_0^{t_2} \phi(x_1) \phi(x_2)\, \omega_2(x_1+x_2)\, dx_2\, dx_1.
  \end{equation}
  Then, there exists an a.s. bounded continuous centered Gaussian process $(G(t), t \ge 0)$ of covariance~$\mexp{G(t_1)G(t_2)} = m_2(t_1 \wedge t_2) - m_{12}(t_1,t_2)$, and the following holds
  \begin{equation}
    \label{eq:conv_skor_th_fclt1}
    \sqrt{n} \big(S_n- \mexp{S_n}\big) \limninfdist G
  \end{equation}
  in the sense of convergence in distribution of random elements of~$D[0,+\infty)$ equipped with the locally uniform topology. Moreover,
  \begin{equation}
    \label{eq:thm1_conv_of_first_moment}
    \mexp{S_n} \limninf m_1
  \end{equation}
  uniformly on compact subsets of~$[0,+\infty)$.
\end{theorem}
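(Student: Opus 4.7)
The plan is to exploit the rotational invariance via the Kostlan--Hough--Krishnapur--Peres--Vir\'{a}g identity mentioned in the introduction: the squared moduli $\{|z|^2 : z \in \mathscr{Z}_n\}$ have the same joint distribution as $\{R_{n,j}^2\}_{j=0}^{n-1}$ with $R_{n,j}$ \emph{independent}, where $R_{n,j}$ has density on $[0,\rho]$ proportional to $r^{2j+2\alpha+1} e^{-n r^{2b}}$. Pushing this identity through~\eqref{eq:def_wn_proc_hard_edge} rewrites the statistic as
\begin{equation*}
  S_n(t) \eqd \frac{1}{n}\suml_{j=0}^{n-1} \phi(Y_{n,j})\, \mathds{1}[Y_{n,j} \le t],
\end{equation*}
where $Y_{n,j} = -2\kappa n \log(R_{n,j}/\rho)$ are independent nonnegative random variables. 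Once this reduction is in hand, I am back in the classical world of triangular arrays, and the proof splits into (i)~an exponential approximation for each $Y_{n,j}$, (ii)~finite-dimensional CLT, and (iii)~tightness.

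First I would analyze the density of $Y_{n,j}$ in the regime relevant to the hard edge. Writing $j=n-k$ for $k\ge 1$, a direct change of variables expresses the density in terms of $\exp\!\big(-\frac{k+\alpha}{\kappa n}y - n\rho^{2b}(e^{-by/(\kappa n)}-1)\big)$ up to normalization. A Taylor expansion of the exponent, together with the identity $\kappa = 1-b\rho^{2b}$, shows that to leading order $Y_{n,n-k}$ is close to an exponential random variable with intensity $k/n$. This is exactly the content of Proposition~\ref{lem:behav_part_ge1}. I would then couple, on a common probability space, each $Y_{n,n-k}$ with an exponential $E_{n,k}\sim\mathrm{Exp}(k/n)$ via the quantile transform, producing a total-variation type bound $\mexp{|Y_{n,n-k}-E_{n,k}|\wedge 1}$ that is summable in $k$ after division by $\sqrt{n}$. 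This is where I expect the main technical difficulty: I must keep track of the error uniformly in $k$, and also cut the sum at some slowly growing $k=J_n$ so that particles with $j\le n-J_n$, being deep in the bulk, satisfy $\mathcal{P}\{Y_{n,j}\le T\}=o(1/n)$ on any compact horizon $[0,T]$ and so contribute negligibly to the $\sqrt{n}$-scaled process.

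With the exponential coupling in place, I would split $S_n$ as in Section~\ref{sec:FCLT_tightness and fidi} into a ``bulk'' part (which vanishes in $L^2$ uniformly on compacts) and an ``edge'' part, and treat each finite-dimensional marginal of $\sqrt{n}(S_n-\mexp{S_n})$ via the Lindeberg--Feller CLT: the summands $\frac{1}{\sqrt{n}}\big(\phi(Y_{n,j})\mathds{1}[Y_{n,j}\le t]-\mexp{\cdot}\big)$ are uniformly bounded by $2\|\phi\|_\infty/\sqrt{n}$, so Lindeberg's condition is automatic. A direct covariance calculation on the coupled exponential sum produces a Riemann sum in the intensity parameter $u=k/n\in(0,1]$, and passing to the limit yields precisely
\begin{equation*}
  \intl_0^1 du \intl_0^{t_1\wedge t_2}\!\phi(x)^2 u e^{-ux}\,dx - \intl_0^1\!du\intl_0^{t_1}\!\!\intl_0^{t_2}\!\phi(x_1)\phi(x_2) u^2 e^{-u(x_1+x_2)}\,dx_2\,dx_1,
\end{equation*}
which after integrating out $u$ against $du$ on $(0,1]$ collapses to the kernels $\omega_1,\omega_2$ in~\eqref{eq:omegas} and gives the claimed $m_2(t_1\wedge t_2)-m_{12}(t_1,t_2)$. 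The same computation at the level of first moments gives~\eqref{eq:thm1_conv_of_first_moment}.

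For the functional upgrade I would invoke Lemma~\ref{lem:tightness}. The required moment bound on increments $\sqrt{n}(S_n(t_2)-S_n(t_1))$ is obtained from a fourth-moment estimate exploiting independence of the $Y_{n,j}$ and boundedness of $\phi$; because all increments are differences of indicators weighted by $\phi$, the standard Billingsley/Chentsov criterion applies on each compact $[0,T]$. Combining finite-dimensional convergence with tightness on each compact, and using that convergence in $D[0,T]$ for every $T$ entails convergence in $D[0,+\infty)$ under the locally uniform topology, gives~\eqref{eq:conv_skor_th_fclt1}. Finally, a.s.~boundedness and continuity of the limit $G$ follow from the regularity of its covariance: the variance is nondecreasing and bounded by $\kappa\|\phi\|_\infty^2\int_0^\infty\omega_1$, and the increments are controlled by $|t_2-t_1|\cdot\|\phi\|_\infty^2\cdot\|\omega_1\|_\infty$, so a standard Kolmogorov criterion for Gaussian processes yields a continuous modification.
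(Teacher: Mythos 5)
Your overall architecture is the same as the paper's (Kostlan-type reduction to independent radii, exponential approximation plus coupling, bulk/edge split, Lindeberg CLT for the fidis, tightness via Lemma~\ref{lem:tightness}), but the quantitative core of the edge analysis is wrong. For $j=n-k$ the exponent of the density of $Y_{n,n-k}$ is $-\frac{n-k+\alpha}{\kappa n}\,y-n\rho^{2b}e^{-by/(\kappa n)}$ (you wrote $\frac{k+\alpha}{\kappa n}$ in place of $\frac{n-k+\alpha}{\kappa n}$), and after Taylor expanding and using $\kappa=1-b\rho^{2b}$ the intensity is $1-\frac{k-\alpha}{\kappa n}$, equivalently $\frac{b\rho^{2b}}{\kappa}(\theta_{n,j}-1)$ as in Proposition~\ref{lem:behav_part_ge1} --- not $k/n$. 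Consequently the particles that matter on a compact horizon are \emph{all} those with $\theta_{n,j}>1$, i.e.\ a macroscopic fraction $\kappa n$ of indices whose intensities fill out $(0,1]$, while only the remaining fraction $1-\kappa=b\rho^{2b}$ (the true bulk, $\theta_{n,j}<1$) escapes to infinity; the paper has to show this escape with a uniform $O(e^{-cn})$ estimate via incomplete-gamma asymptotics. Your proposal to cut at a slowly growing $J_n$ and discard $j\le n-J_n$ would throw away order-one contributions and make $\sqrt{n}(S_n-\mexp{S_n})$ degenerate. The same misidentification shows up in your covariance: integrating the rates against $du$ on $(0,1]$ with total mass $1$ gives $\int_0^{t_1\wedge t_2}\phi^2\omega_1-\iint\phi\phi\,\omega_2$, which is $\kappa^{-1}\big(m_2(t_1\wedge t_2)-m_{12}(t_1,t_2)\big)$, not the claimed covariance; the factor $\kappa$ arises precisely because only the fraction $\kappa$ of indices carries rates in $(0,1]$ ($\frac{1}{n}\sum_{\theta_{n,j}>1}g(\text{rate})\to\kappa\int_0^1 g(u)\,du$). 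Your first-moment limit would likewise be $m_1/\kappa$ instead of $m_1$.

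A second, independent gap is the coupling step. A quantile-transform coupling with a bound on $\mexp{|Y_{n,j}-E_{n,j}|\wedge 1}$ controls the distance between $Y_{n,j}$ and $E_{n,j}$, but the theorem assumes only that $\phi$ is bounded and \emph{measurable}, so closeness of the arguments gives no control of $\phi(Y_{n,j})-\phi(E_{n,j})$. This is exactly why the paper bounds the total variation distance (Lemma~\ref{lem:tv_bound}), passes to the images $\psi(U_{n,j})$, $\psi(E_{n,j})$, and couples those via the Prohorov distance; alternatively a maximal (total-variation) coupling of $U_{n,j}$ with $E_{n,j}$ would do, since on the event $\{U_{n,j}=E_{n,j}\}$ the images agree and $\mexp{(\phi(U_{n,j})-\phi(E_{n,j}))^2}\le 4\|\phi\|_\infty^2\, d_{\mathrm{TV}}$. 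As written, your coupling argument does not close for general measurable $\phi$. The remaining ingredients (Lindeberg for the bounded triangular array, the fourth-moment increment bound fed into Lemma~\ref{lem:tightness}, passing from $D[0,T]$ for all $T$ to $D[0,+\infty)$, and continuity of $G$) are fine in outline once the rates, the contributing index set, and the coupling are corrected.
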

\begin{remark}
  \label{rem:rem1}
  The convergence of the finite-dimensional distributions is trivially implied by the theorem. In particular, setting~$\phi=1$ yields the finite-dimensional central limit theorem in Y.~Ameur, C.~Charlier, J.~Cronvall, and J.~Lenells~\cite[Corollary~1.5]{ACCL2022}.

  Furthermore, due to Slutsky's theorem, one also has the law of large numbers
  \begin{equation}
    S_n \limninfprob m_1,
  \end{equation}
  uniformly on compact subsets of~$[0,+\infty)$, in probability. In other words, for each compact~$K \subset [0,+\infty)$ and for every~$\epsilon>0$, we have
  \begin{equation}
    \prob{\| S_n - m_1\|_K > \epsilon} \limninf 0,
  \end{equation}
  where~$\|\cdot \|_K$ is the supremum norm over~$K$.
\end{remark}
\begin{remark}
  \label{rem:conv_of_moments}
  The theorem implies that all the finite-dimensional moments of~$X_{n} = \sqrt{n}\big(S_n - \mexp{S_n}\big)$ converge to those of~$G$,
  \begin{equation}
    \label{eq:conv_of_moments}
    \mexp{(X_n(t_1))^{p_1} \cdots  (X_n(t_\ell))^{p_\ell}} \limninf \mexp{(G(t_1))^{p_1}  \cdots  (G(t_\ell))^{p_\ell}},
  \end{equation}
  where~$t_1, \ldots, t_\ell \ge 0$ and~$p_1, \ldots, p_\ell \in \mathbb{N}$. Indeed, due to the independence, one can verify that all even moments~$\mexp{|X_n(t)|^{2p}}$ are bounded by a constant~$C_p>0$ that only depends on~$p \in \mathbb{N}$. Hence, the Cauchy--Bunyakovsky--Schwarz inequality guarantees that all the moments on the left-hand side of~\eqref{eq:conv_of_moments} are bounded uniformly in~$n$, and thus the corresponding sequence of the random variables is uniformly integrable. The convergence of finite-dimensional distributions concludes the argument.
\end{remark}
\begin{remark}
  \label{rem:rem3.3}
  It will be explained in Section~\ref{sec:proof_thm1} that the convergence in~$D[0,+\infty)$ in the theorem cannot be extended to that in~$D[0,+\infty]$. This happens because part of the probability mass ``escapes'' to infinity (see Proposition~\ref{lem:behav_part_le1}).
\end{remark}
\begin{remark}
	We emphasize that the result holds for arbitrary bounded measurable~$\phi$ without additional regularity, which is also reflected in the fact that the covariance function~$\mexp{G(t_1)G(t_2)}$ does not depend on the derivatives of~$\phi$. In particular, we do not identify a Gaussian free field in the limit.
\end{remark}

The proof of Theorem~\ref{thm:thm1} will rely on proving the finite-dimensional convergence and then the tightness of the corresponding family of distributions. The tightness will follow from an auxiliary inequality we derive in Lemma~\ref{lem:tightness}. The finite-dimensional convergence will follow as a consequence of the fact that the radial behavior of the particles of~$\mathscr{Z}_n$ coincides with that of independent random variables. We will show that, as $n$ becomes large, approximately~$1-\kappa = b \rho^{2b} \in (0,1)$ fraction of particles escapes to infinity; the remaining particles behave asymptotically (in the hard-edge scaling~\eqref{eq:def_wn_proc_hard_edge}) in such a way that their appropriately scaled logarithms are independent \textit{exponential} random variables with the rate parameter spread over~$[0,1]$ uniformly. By coupling on the same probability space the exponential random variables with the random variables that describe the ensemble~$\mathscr{W}_n$, we will be able to establish finite-dimensional convergence of~$\sqrt{n}\big(S_n - \mexp{S_n}\big)$ using much simpler exponential random variables, with simple dependence on the parameters.

To begin, we recall a result by J.~Ben Hough, M.~Krishnapur, Y.~Peres, and B.~Vir\'{a}g~\cite{BHKPV2006} about the behavior of radii of particles in a radially-symmetric determinantal point process. We state this theorem in a slightly modified but equivalent form.
\begin{theorem}[Ben Hough, Krishnapur, Peres, Vir\'{a}g, 2006]
  \label{thm:indep_rv_representation}
  Let~$\mathscr{Z}$ be a determinantal point process on~$(\Omega, \mathcal{F}, \mathcal{P})$ given by the kernel
  \begin{equation}
    K(z,w) = \suml_{j=0}^{n-1} \lambda_j a_{j}^2 (z \overline{w})^j,\quad \lambda_j \in [0,1],
  \end{equation}
  with respect to a finite radially-symmetric Borel measure~$\mu$ on~$\mathbb{C}$, where
  \begin{equation}
  \label{eq:310}
    a_{j}^2 = \left(\intl_{\mathbb{C}} |z|^{2j}\, \mu(dz)\right)^{-1}.
  \end{equation}
  Then,
  \begin{equation}
    \label{thm:indep_rv_repr_claim}
    \suml_{z \in \mathscr{Z}} \delta_{|z|} \eqd \suml_{j=1}^{n} I_j \, \delta_{\Upsilon_j},
  \end{equation}
  where~$\{\Upsilon_1, \ldots, \Upsilon_{n} \}$ and $\{I_1, \ldots, I_{n} \}$ are two independent sets of mutually independent random variables on~$(\Omega, \mathcal{F}, \mathcal{P})$ such that
  \begin{equation} 
	  \label{eq:312}
    \prob{\Upsilon_j \le r} = a_{j-1}^2  \intl_{\{|z| \le r\}}|z|^{2 (j-1)} \mu(dz), \quad j=1, \ldots,n,
  \end{equation}
  and the~$I_j$ are Bernoulli random variables, $\prob{I_j=1} = \lambda_{j-1}$.
\end{theorem}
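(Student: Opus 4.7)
The plan is to reduce to the projection-kernel case $\lambda_j\equiv 1$, express the joint density of the moduli as a permanent via an angular integration, and then match it against the symmetrized density of the independent radii. For the first step, I would observe that the functions $\phi_j(z) = a_j z^j$, $j=0,\ldots,n-1$, are orthonormal in $L_2(\mathbb{C},\mu)$: this is exactly~\eqref{eq:310} combined with the fact that radial symmetry of $\mu$ kills the cross integrals $\int z^j\bar z^k\,\mu(dz)$ for $j\neq k$. Consequently the numbers $\lambda_0,\ldots,\lambda_{n-1}\in[0,1]$ are the eigenvalues of the integral operator with kernel $K$. I would then invoke the standard Bernoulli-mixture realization for determinantal point processes with self-adjoint $[0,1]$-valued kernels: on an enlarged probability space, draw independent $I_j\sim\mathrm{Bernoulli}(\lambda_{j-1})$, and conditional on $\{I_j\}$ let $\mathscr{Z}$ be the determinantal process with projection kernel $\sum_{j:\,I_j=1}a_{j-1}^2(z\bar w)^{j-1}$. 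The $I_j$ then appear exactly as in~\eqref{thm:indep_rv_repr_claim} and are automatically independent of any conditional-on-$\{I_j\}$ radii, so it suffices to prove the claim when $\mathscr{Z}$ has the orthogonal polynomial ensemble density $\frac{1}{n!Z_n}\prod_{i<k}|z_k-z_i|^2\prod_i\mu(dz_i)$ with all $\lambda_j = 1$.

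For the second step, I would pass to polar coordinates $z_i = r_i e^{i\theta_i}$, write $\mu(dz) = \mu_R(dr)\cdot d\theta/(2\pi)$ with $\mu_R$ the push-forward of $\mu$ under $z\mapsto|z|$, and expand the squared Vandermonde as
\[
  \prod_{i<k}|z_k - z_i|^2 \;=\; \bigl|\det[z_i^{j-1}]\bigr|^2 \;=\; \sum_{\sigma,\tau\in S_n}\mathrm{sgn}(\sigma\tau)\prod_{i=1}^n r_i^{\sigma(i)+\tau(i)-2}\,e^{i\theta_i(\sigma(i)-\tau(i))}.
\]
Integrating each $\theta_i$ against $d\theta_i/(2\pi)$ kills every term with $\sigma(i)\neq\tau(i)$ by orthogonality of characters on the circle. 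Since this must hold for all $i$ simultaneously, only the diagonal $\sigma=\tau$ survives, contributing $\sum_\sigma\prod_i r_i^{2(\sigma(i)-1)} = \mathrm{perm}\!\left[r_i^{2(j-1)}\right]_{i,j=1}^n$. Therefore the unordered law of $\{|z|:z\in\mathscr{Z}\}$ has density with respect to the symmetric measure $\prod_i\mu_R(dr_i)$ proportional to this permanent.

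For the third step, I would note that the independent vector $(R_1,\ldots,R_n)$ specified by~\eqref{eq:312} has joint density $\prod_j a_{j-1}^2 r_j^{2(j-1)}$ with respect to $\prod_i\mu_R(dr_i)$; passing to the unordered multiset requires symmetrizing over $S_n$ and produces $\frac{1}{n!}\prod_j a_{j-1}^2 \cdot \mathrm{perm}\!\left[r_i^{2(j-1)}\right]$. The two distributions — of $\{|z|:z\in\mathscr{Z}\}$ and of $\{R_1,\ldots,R_n\}$ — are thus both symmetric probability measures proportional to the same permanent against $\prod_i\mu_R(dr_i)$, so they coincide. This proves~\eqref{thm:indep_rv_repr_claim} in the projection case, and Step~1 extends it to the general one.

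The delicate bookkeeping in Step~2 — the factors of $2\pi$, the $1/n!$ from unordered-versus-ordered configurations, the normalizations $Z_n$ and $a_j^2$ — is uniquely pinned down by the requirement that both sides be probability measures, so one never needs to compute it explicitly. The conceptual obstacle is the annihilation of the cross-terms $\sigma\neq\tau$ after angular integration, which uses radial symmetry of $\mu$ in an essential way; without it, the permanent structure — and hence the independence of the $R_j$ — is lost.
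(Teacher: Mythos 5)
This statement is quoted in the paper as an external result of Ben Hough, Krishnapur, Peres and Vir\'{a}g; the paper itself gives no proof, so there is nothing internal to compare against. Your argument is, in substance, the original one from the cited source: reduce to projection kernels via the Bernoulli-mixture representation of determinantal processes with self-adjoint $[0,1]$-contraction kernels, then integrate out the angles in $|\det|^2$ to obtain a permanent and recognize it as the symmetrization of the law of independent radii with densities proportional to $r^{2(j-1)}\mu_R(dr)$. All three steps are sound: the orthonormality of $a_jz^j$ follows from rotation invariance, the character-orthogonality argument correctly kills the off-diagonal $\sigma\neq\tau$ terms, and matching two probability measures proportional to the same permanent indeed fixes all constants.

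One point should be tightened. After conditioning on $\{I_j\}$, the process is the determinantal projection onto $\mathrm{span}\{a_jz^j : j\in J\}$ for an \emph{arbitrary} subset $J\subseteq\{0,\dots,n-1\}$, so it is not literally the orthogonal polynomial ensemble with the full Vandermonde density; when some $\lambda_j<1$, the case you actually need is the generalized Vandermonde $\bigl|\det[z_i^{j_l}]\bigr|^2$ with exponents $j_l\in J$. Your Step 2 computation goes through verbatim for such a subset (expand over pairs of bijections, angular integration keeps only the diagonal, giving $\mathrm{perm}\bigl[r_i^{2j_l}\bigr]$, which is the symmetrized law of the independent radii $\{R_{j+1}:j\in J\}$), so this is a presentational gap rather than a failure, but as written the sentence ``it suffices to prove the claim when all $\lambda_j=1$'' claims a reduction that the full-Vandermonde case alone does not furnish. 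Fixing the statement of Step 2 to cover an arbitrary subset of monomials completes the proof.
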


\begin{remark}
	\label{rem:Gernot}
	It is worth noticing that for the symplectic ensembles with rotationally-invariant weights, a similar result holds with~$j$ replaced by~$2j$ on the right-hand side of~\eqref{eq:310} and~\eqref{eq:312}, see G.~Akemann, J. R. Ipsen, and E. Strahov~\cite[Theorem~3.6]{AIS2014}.  
\end{remark}

Rewrite~\eqref{eq:additive_stat} in terms of the original process~$\mathscr{Z}_n$,
\begin{equation}
  S_n(t) = \frac{1}{n}\sum_{z \in \mathscr{Z}_n} \phi\left(-2 \kappa n \log\left|\frac{z}{\rho}\right|\right) \mathds{1}\left[- 2 \kappa n \log{\left|\frac{z}{\rho}\right|} \le t\right].
\end{equation}
Then, Theorem~\ref{thm:indep_rv_representation} implies that
\begin{equation}
  \label{eq:S_n_via_U}
  S_n(t) \eqd \frac{1}{n}\sum_{j=1}^n \phi(U_{n,j})\, \mathds{1}\!\left[U_{n,j} \le t\right],
\end{equation}
where~$U_{n,j}$ are a.s. non-negative random variables
\begin{equation}
  \label{eq:conn_r_and_u}
  U_{n,j} = -\frac{n \kappa}{b} \log{R_{n,j}}, \quad j=1, \ldots,n;
\end{equation}
and the independent random variables~$R_{n,j}$ are distributed according to
\begin{equation}
  \label{eq:dist_r}
  \prob{R_{n,j} \in dr} = a_{n,j}^2 r^{n \rho^{2b}\theta_{n,j}-1} e^{-n \rho^{2b} r}\, \mathds{1}[0 \le r \le 1]\,  dr,\quad j=1, \ldots, n,
\end{equation}
with
\begin{equation}
  \label{eq:theta_and_anj}
  \theta_{n,j} = \frac{j + \alpha}{bn\rho^{2b}},\quad   a_{n,j}^{2} = \left(\intl_{0}^1 r^{n \rho^{2b}\theta_{n,j}-1} e^{-n \rho^{2b} r} \, dr \right)^{-1}.
\end{equation}

Now, we can establish a simple corollary of Theorem~\ref{thm:thm1}.

\begin{corollary}
  \label{cor:thm1}
  Assume that~$\phi$ has a locally bounded derivative in Theorem~\ref{thm:thm1}. Then,
  \begin{equation}
    \label{cor:cor_thm1_statement}
    \sqrt{n} (S_n- m_1) \limninfdist G
  \end{equation}
  in the sense of the convergence in distribution of random elements of~$D[0,+\infty)$ equipped with the locally uniform topology.
\end{corollary}
\begin{remark}
  Note that we have~$m_1$ in place of~$\mexp{S_n}$ in ~\eqref{cor:cor_thm1_statement}. In other words, not only are the fluctuations about local averages~$\mexp{S_n}$ Gaussian in the limit, but also, under the assumption, so are the fluctuations about the global average~$m_1$.
\end{remark}
\begin{proof}
  It suffices to show that
  \begin{equation}
    \label{eq:cor_fluct_m1_eq1}
    \sqrt{n}\big\|\mexp{S_n} - m_1\big\|_K \limninf 0,
  \end{equation}
  for every compact subset~$K \subset [0,+\infty)$, where~$\|\cdot\|_K$ stands for the supremum norm over~$K$.

  Write
  \begin{equation}
    \mexp{S_n(t)} - m_1(t) = \frac{1}{n} \suml_{j=1}^n \intl_{0}^t \phi(x) g_{n,j}(x)\, dx -  \kappa \intl_0^t \phi(x) \omega_1(x)\, dx,
  \end{equation}
  where the~$g_{n,j}$ are the densities of the~$U_{n,j}$ and~$\omega_1$ is the density given in~\eqref{eq:omegas}. Integrating by parts, we obtain
  \begin{equation}
    \begin{aligned}
      \mexp{S_n(t)} - m_1(t) = \phi(t) \big(F_n(t) - \kappa F(t) \big) - \intl_0^t \phi'(x) \big( F_n(x) - \kappa F(x) \big)\, dx,
    \end{aligned}
  \end{equation}
  where
  \begin{equation}
    F_n(t) \df \frac{1}{n} \suml_{j=1}^n F_{n,j}(t),
  \end{equation}
  with the~$F_{n,j}$ being the distribution functions of the~$U_{n,j}$,
  \begin{equation}
    F_{n,j}(t) = \intl_0^t g_{n,j}(x)\, dx,
  \end{equation}
  and
  \begin{equation}
    F(t) \df \intl_0^t \omega_1(x)\, dx.
  \end{equation}

  By using the assumed local boundedness of~$\phi'$, we arrive at
  \begin{equation}
    \label{eq:cor_fluct_m1_eq2}
    \big\|\mexp{S_n} - m_1\big\|_K \le C \big\| F_n - \kappa F \big\|_K,
  \end{equation}
  where~$C>0$ is a constant that only depends on~$K$, $\|\phi\|_K$, and~$\|\phi'\|_K$. Now, we are to prove that the right-hand side of~\eqref{eq:cor_fluct_m1_eq2} converges to zero, as~$n \to \infty$, fast enough so that~\eqref{eq:cor_fluct_m1_eq1} holds. First, note that~$F_n$ represents the expectation of a disk counting statistic, which has already been studied in Y.~Ameur, C.~Charlier, J.~Cronvall, and J.~Lenells~\cite{ACCL2022}. One can readily verify that the proof of the asymptotic formula for~$\mexp{N(r_\ell)}$ in Corollary~1.5 from~\cite{ACCL2022} holds uniformly in~$r_\ell$ on compact subsets. After passing to~$n - N(r_\ell)$ instead of~$N(r_\ell)$ and changing the variable~$r_\ell \mapsto n (1- e^{-\frac{b t}{\kappa n}})$, which does not affect the uniform convergence, we find (in our notation) that
  \begin{equation}
    F_n(t) = \kappa F(t) + O\left(\frac{\log{n}}{n}\right),
  \end{equation}
  where the~$O$-term is uniform in~$t$ on compact sets in~$[0,+\infty)$. This implies that the right-hand side of~\eqref{eq:cor_fluct_m1_eq2} converges to zero at least as fast as~$O(\log{n}/n)$, and thus~\eqref{eq:cor_fluct_m1_eq1} follows.
\end{proof}

\subsection{Tightness and finite-dimensional convergence}
\label{sec:FCLT_tightness and fidi}
We will represent~$S_n$ in~\eqref{eq:S_n_via_U} as a sum of two \textit{independent} processes,
\begin{equation}
  \label{eq:split_Sn}
  S_n(t) \eqd S_n^{(1)}(t) + S_n^{(2)}(t),
\end{equation}
where
\begin{equation}
  \label{eq:def_of_Sn12}
  S_n^{(1)}(t) = \frac{1}{n}\sum_{\theta_{n,j}\le 1} \phi(U_{n,j})\, \mathds{1}\left[U_{n,j} \le t\right], \quad S_n^{(2)}(t) = \frac{1}{n}\sum_{\theta_{n,j} > 1} \phi(U_{n,j})\, \mathds{1}\left[U_{n,j} \le t\right],
\end{equation}
where~$U_{n,j}$ is defined in~\eqref{eq:conn_r_and_u}--\eqref{eq:theta_and_anj}. We will witness subsequently that these two terms manifest very different types of behavior.

\subsubsection{Tightness}
We establish a lemma which will help us to ensure the tightness of (the distributions of) the random variables we are dealing with. To begin, recall the definition of the \textit{Skorohod topology} on~$D[a,b]$. This topology is induced by the \textit{Skorohod metric}
\begin{equation}
  \label{eq:Skorohod_metric}
  d(x,y) = \infl_{\lambda \in \Lambda} \{\|\lambda- \id \| \vee \|x - y \circ \lambda \| \}, \quad x,y \in D[a,b],
\end{equation}
where~$\| \cdot \|$ stands for the uniform norm on~$[a, b]$ and~$\Lambda$ is the class of strictly increasing continuous mappings of~$[a,b]$ onto itself. For more information about this topic see P.~Billingsley~\cite{B1999book} and D. Pollard~\cite{P2011book}.

\begin{lemma}
  \label{lem:tightness}
  Let~$(A_{n,j},B_{n,j})$, $j=1,\ldots,n$, $n \in \mathbb{N}$, be independent pairs of random variables. Assume that all~$A_{n,j}$ are bounded in absolute value by the same constant~$M>0$. Set
  \begin{equation}
    F_n(t) = \frac{1}{n} \suml_{j=1}^{n} F_{n,j}(t),
  \end{equation}
  where
  \begin{equation}
    F_{n,j}(t) = \prob{B_{n,j} \le t}, \quad j=1, \ldots,n, \ n \in \mathbb{N},
  \end{equation}
  and assume that~$F_n$ converges uniformly to a continuous function~$F$ on~$[a,b]$.

  Define the stochastic processes~$(X_n(t), t \in [a,b])$, $n \in \mathbb{N}$, by
  \begin{equation}
    X_n(t) = \sqrt{n} \big(S_n(t) - \mexp{S_n(t)}\big),  
  \end{equation}
  where
  \begin{equation}
    S_n(t) = \frac{1}{n} \suml_{j=1}^{n}A_{n,j}\, \mathds{1}[B_{n,j} \le t].
  \end{equation}

  Then, the family of distributions corresponding to~$X_n$ is tight in the space~$D[a,b]$ with the Skorohod topology, provided the following conditions are also satisfied
  \begin{gather}
    \liml_{c \to +\infty} \overline{\limundninf} \prob{|X_n(t)| > c} = 0, \quad t \in [a,b]. \label{eq:tight_prop_eq1}\\
    \liml_{t \downarrow a} \overline{\limundninf} \prob{|X_n(t) - X_n(a)| > \epsilon} = 0, \  \liml_{t \uparrow b} \overline{\limundninf} \prob{|X_n(b-) -X_n(t)| > \epsilon} = 0.    \label{eq:tight_prop_eq2}
  \end{gather}
\end{lemma}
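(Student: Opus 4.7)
The plan is to invoke Billingsley's moment criterion for tightness in $D[a,b]$ with the Skorohod topology, combining the given pointwise tightness~\eqref{eq:tight_prop_eq1} and endpoint estimates~\eqref{eq:tight_prop_eq2} with a fourth-moment bound on disjoint increments of $X_n$. Concretely, for any $s \le t \le u$ in $[a,b]$ I would prove
\begin{equation*}
\mexp{(X_n(t) - X_n(s))^2 (X_n(u) - X_n(t))^2} \le C_M\, (F_n(u) - F_n(s))^2 + \frac{C_M'}{n}(F_n(u) - F_n(s)),
\end{equation*}
with constants depending only on $M$. The residual term $\tfrac{1}{n}(F_n(u) - F_n(s))$ is absorbed via AM--GM into $\tfrac{1}{2n^2} + \tfrac{1}{2}(F_n(u) - F_n(s))^2$, so that after using the uniform convergence $F_n \to F$ with $F$ continuous on the compact $[a,b]$, we obtain a Billingsley moment condition (exponent $\beta = 2$, control function $F$, index $\alpha = 1 > 1/2$) up to a uniformly vanishing correction. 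By the standard criterion (e.g.\ Billingsley (1999), Theorem~13.5), this yields the modulus-of-continuity bound $\lim_{\delta \to 0}\limsup_n \prob{w'_\delta(X_n) \ge \epsilon} = 0$ on the interior, which combined with \eqref{eq:tight_prop_eq1}--\eqref{eq:tight_prop_eq2} delivers tightness in $D[a,b]$.

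For the moment estimate itself, introduce $U_{n,j} = A_{n,j}\mathds{1}[s < B_{n,j} \le t]$ and $V_{n,j} = A_{n,j}\mathds{1}[t < B_{n,j} \le u]$ and denote their centered versions by $\tilde U_{n,j}, \tilde V_{n,j}$, so that $X_n(t) - X_n(s) = n^{-1/2}\sum_j \tilde U_{n,j}$ and $X_n(u) - X_n(t) = n^{-1/2}\sum_j \tilde V_{n,j}$, with the pairs $(\tilde U_{n,j}, \tilde V_{n,j})$ independent across $j$. Expanding the product of squares as a fourfold sum and using independence across $j$ eliminates every term in which some index appears exactly once, leaving only the ``diagonal'' pattern $j_1 = j_2 = k_1 = k_2$, the ``block'' pattern $j_1 = j_2 \neq k_1 = k_2$, and the two ``swap'' patterns $j_1 = k_1 \neq j_2 = k_2$ and $j_1 = k_2 \neq j_2 = k_1$. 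The crucial structural observation is the disjointness $U_{n,j} V_{n,j} \equiv 0$, which forces the cancellation $\mexp{\tilde U_{n,j} \tilde V_{n,j}} = -\mexp{U_{n,j}}\mexp{V_{n,j}}$ and similarly reduces the diagonal contribution. Using $|A_{n,j}| \le M$, each surviving block is bounded by products of the marginal increments $F_{n,j}(\cdot) - F_{n,j}(\cdot) \in [0,1]$, and elementary summation yields the displayed bound.

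The main obstacle I anticipate is the bookkeeping of the fourfold sum: in particular, showing that the diagonal term $n^{-2}\sum_j \mexp{\tilde U_{n,j}^2 \tilde V_{n,j}^2}$ and the cross-covariance term $n^{-2}\bigl(\sum_j \mexp{\tilde U_{n,j}\tilde V_{n,j}}\bigr)^2$ are quadratic in $F_n(u) - F_n(s)$ up to the harmless $O(n^{-1})$ residual. The disjointness of the indicator supports is essential here, since it is precisely what turns $\mexp{\tilde U_{n,j}\tilde V_{n,j}}$ into the product $-\mexp{U_{n,j}}\mexp{V_{n,j}}$, allowing the $j$-sum to be organized via $\sum_j (F_{n,j}(u) - F_{n,j}(s))^2 \le \sum_j (F_{n,j}(u) - F_{n,j}(s)) = n(F_n(u) - F_n(s))$; without this cancellation the bound would be linear in $F_n(u) - F_n(s)$ and insufficient for Billingsley's criterion. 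Once the moment inequality is in place, passage to tightness is routine.
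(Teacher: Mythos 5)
Your fourth-moment estimate is essentially the paper's own computation: the same splitting of the two increments into sums over $j$ with disjoint indicator supports, the same classification of surviving index patterns after expanding the fourfold sum, and the same use of $U_{n,j}V_{n,j}\equiv 0$ to turn the cross-covariance into $-\mexp{U_{n,j}}\mexp{V_{n,j}}$; the inequality you state is correct. The genuine gap is in the step you call routine, namely the passage from this estimate to tightness. Billingsley's Theorem~13.5 requires a bound of the form $\mexp{(X_n(t)-X_n(t_1))^2(X_n(t_2)-X_n(t))^2}\le (H(t_2)-H(t_1))^{2\alpha}$ with a \emph{fixed} nondecreasing continuous $H$, uniformly in $n$. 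It accommodates neither a control function $F_n$ that changes with $n$ nor an additive remainder, and your AM--GM step manufactures exactly such a remainder: after absorption the right-hand side contains the constant $\tfrac{1}{2n^2}$, which for pairs $t_1<t_2$ with $F_n(t_2)-F_n(t_1)\ll 1/n$ dominates and is not controlled by any power of the increment, so the hypothesis of the theorem simply fails. Trading $F_n$ for $F$ via the uniform convergence does not repair this, since it again produces an additive error of order $\|F_n-F\|$. Saying the criterion applies ``up to a uniformly vanishing correction'' is precisely the unproven step; the paper explicitly flags that Theorem~13.5 cannot be invoked directly because the right-hand side depends on $n$.

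Two repairs are needed, and both are available. First, the $\tfrac1n$ residual is an artifact of a suboptimal bound and should be dropped rather than absorbed: since all the increments are nonnegative, $\suml_{j} (F_{n,j}(t_2)-F_{n,j}(t))(F_{n,j}(t)-F_{n,j}(t_1)) \le \bigl(\suml_{j} (F_{n,j}(t_2)-F_{n,j}(t))\bigr)\bigl(\suml_{j}(F_{n,j}(t)-F_{n,j}(t_1))\bigr)=n^2(F_n(t_2)-F_n(t))(F_n(t)-F_n(t_1))$, so the diagonal term is itself quadratic and one gets the clean bound $\le 5M^4(F_n(t_2)-F_n(t_1))^2$ with no correction term. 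Second, and this is the real content of the lemma, one must rerun the chaining/modulus argument behind Theorem~13.5 with the $n$-dependent $F_n$ in place of a fixed control function, obtaining $\prob{w''(X_n,\delta)>\epsilon}\le \tfrac{2C}{\epsilon^4}\, w(F_n,2\delta)$; then the triangle inequality $w(F_n,2\delta)\le 2\|F_n-F\|+w(F,2\delta)$, the uniform convergence $F_n\to F$, and the uniform continuity of $F$ on $[a,b]$ give $\liml_{\delta\to+0}\overline{\limundninf}\,\prob{w''(X_n,\delta)>\epsilon}=0$, after which tightness follows from this together with \eqref{eq:tight_prop_eq1}--\eqref{eq:tight_prop_eq2} via Billingsley's Theorems~13.2 and~13.3. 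Your proposal stops exactly where this argument has to be made.
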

\begin{remark}
  The lemma remains valid when $b = +\infty$, and the proof only requires minor superficial adjustments.
\end{remark}

\begin{proof}
  \noindent \textbf{a)} First we will establish an auxiliary inequality. Write
  \begin{equation}
    \label{eq:tight_proof_eq1}
    \mexp{(X_n(t)-X_n(t_1))^2(X_n(t_2)-X_n(t))^2} = \frac{1}{{n}^2}\mexp{\left(\suml_{j=1}^{n} \mathring{\alpha}_j\right)^2\left(\suml_{j=1}^{n} \mathring{\beta}_j\right)^2},
  \end{equation}
  where~$a \le t_1< t < t_2 \le b$ and
  \begin{equation}
    \alpha_{n,j} = A_{n,j} \mathds{1}[t_1 < B_{n,j} \le t],\quad     \beta_{n,j} = A_{n,j} \mathds{1}[t < B_{n,j} \le t_2]
  \end{equation}
  are two orthogonal random variables whose centered versions are defined by
  \begin{equation}
    \mathring{\alpha}_{n,j} = \alpha_{n,j} - \mexp{\alpha_{n,j}}, \quad \mathring{\beta}_{n,j} = \beta_{n,j} - \mexp{\beta_{n,j}}.
  \end{equation}
  Now, we are going to simplify the expression~\eqref{eq:tight_proof_eq1} using the independence,
  \begin{equation}
    \label{eq:tight_proof_eq2}
    \begin{aligned}
      &\mexp{(X_n(t)-X_n(t_1))^2(X_n(t_2)-X_n(t))^2} =  \frac{1}{{n}^2}\mexp{\suml_{\substack{j_1, j_2=1\\k_1,k_2=1}}^{n} \mathring{\alpha}_{n,j_1}\mathring{\alpha}_{n,j_2}\mathring{\beta}_{n,k_1}\mathring{\beta}_{n,k_2}}\\
      &=  \frac{1}{n^2} \Bigg[ \suml_{j \ne k}\mexp{\mathring{\alpha}_{n,j}^2} \mexp{\mathring{\beta}_{n,k}^2} + 2 \suml_{j \ne k}\mexp{\mathring{\alpha}_{n,j}\mathring{\beta}_{n,j}}\mexp{\mathring{\alpha}_{n,k}\mathring{\beta}_{n,k}} + \suml_{j}\mexp{\mathring{\alpha}_{n,j}^2 \mathring{\beta}_{n,j}^2} \Bigg].
    \end{aligned}
  \end{equation}
  Observe that 
  \begin{equation}
    \label{eq:tight_proof_eq3}
    \begin{aligned}
      &\mexp{\mathring{\alpha}_{n,j}^2} \mexp{\mathring{\beta}_{n,k}^2}  \le \mexp{{\alpha}_{n,j}^2} \mexp{{\beta}_{n,k}^2} \le M^4 (F_{n,j}(t)-F_{n,j}(t_1)) (F_{n,k}(t_2)-F_{n,k}(t))\\
      &\le M^4 n^2 (F_{n}(t)-F_{n}(t_1)) (F_{n}(t_2)-F_{n}(t)) \le M^4 n^2 (F_{n}(t_2)-F_{n}(t_1))^2.
    \end{aligned}
  \end{equation}
  Using the fact that~$\alpha_{n,j} \beta_{n,j} = 0$, we obtain
  \begin{equation}
    \label{eq:tight_proof_eq4}
    \begin{aligned}
      &\mexp{\mathring{\alpha}_{n,j}\mathring{\beta}_{n,j}}\mexp{\mathring{\alpha}_{n,k}\mathring{\beta}_{n,k}} = \mexp{\alpha_{n,j}}\mexp{\beta_{n,j}}\mexp{\alpha_{n,k}}\mexp{\beta_{n,k}} \\
      &\le M^4 (F_{n,j}(t_2)-F_{n,j}(t)) (F_{n,k}(t)-F_{n,k}(t_1)) \le M^4 n^2 (F_{n}(t_2)-F_{n}(t_1))^2
    \end{aligned}
  \end{equation}
  and
  \begin{equation}
    \label{eq:tight_proof_eq5}
    \begin{aligned}
      &\mexp{\mathring{\alpha}_{n,j}^2 \mathring{\beta}_{n,j}^2} = \big(\mexp{\alpha_{n,j}}\big)^2\mexp{\beta_{n,j}^2} + \big(\mexp{\beta_{n,j}}\big)^2\mexp{\alpha_{n,j}^2} - 3 \big(\mexp{\alpha_{n,j}}\big)^2\big(\mexp{\beta_{n,j}}\big)^2 \\
      &\le 2 M^4 (F_{n,j}(t_2)-F_{n,j}(t)) (F_{n,j}(t) -F_{n,j}(t_1)) \le 2 M^4 n^2 (F_{n}(t_2)-F_{n}(t_1))^2.
    \end{aligned}
  \end{equation}

  In light of~\eqref{eq:tight_proof_eq3} -- \eqref{eq:tight_proof_eq5}, we find from~\eqref{eq:tight_proof_eq2} that
  \begin{equation}
    \label{eq:tight_proof_eq6}
    \mexp{(X_n(t)-X_n(t_1))^2(X_n(t_2)-X_n(t))^2} \le 5 M^4 (F_n(t_2) - F_n(t_1))^2.
  \end{equation}
  Note that the right-hand side depends on~$n$ and we cannot use standard results such as~\cite[Theorem~13.5]{B1999book} directly to establish the tightness. In particular, this makes our setup more involved than that in, e.g., G.~R. Shorack~\cite[p.~148]{S1973}-- we deal with the original process~$X_n$ and not its ``reduced'' version.  The following argument will remedy the situation.
  \\[2ex]
  \noindent \textbf{b)} We are going to bound the modulus of ``continuity'' of~$X_n \in D[a,b]$,
  \begin{equation}
    w''(X_n,\delta) = \supl_{\substack{t_1 < t < t_2\\|t_2-t_1| < \delta}} \big(|X_n(t)-X_n(t_1)|\wedge|X_n(t_2)-X_n(t)|\big),
  \end{equation}
  where the supremum extends over all~$t_1,t,t_2 \in [a,b]$ satisfying the inequalities.
  
  An argument similar to that in the proof of Theorem~13.5 from P.~Billingsley~\cite{B1999book} shows that for every~$\epsilon>0$, the bound~\eqref{eq:tight_proof_eq6} implies
  \begin{equation}
    \prob{w''(X_n,\delta) > \epsilon} \le \frac{2C}{\epsilon^4} w(F_n, 2 \delta)
  \end{equation}
  for some constant~$C>0$, where~$w$ is the usual modulus of continuity
  \begin{equation}
    w(F_n, 2 \delta) = \supl_{|t_2-t_1|< 2 \delta}|F_n(t_2) - F_n(t_1)|;
  \end{equation}
  the supremum extends over all~$t_1,t_2 \in [a,b]$ satisfying~$|t_2-t_1| < 2\delta$.
  
  The triangle inequality yields
  \begin{equation}
    w(F_n, 2 \delta) \le 2 \| F_n-F\| + w(F, 2 \delta).
  \end{equation}
  The uniform convergence of~$F_n$ to~$F$ lets us make the first term arbitrarily small by choosing~$n$ large enough. The continuity of~$F$ on~$[a,b]$, and thus the \textit{uniform} continuity, gives control over the second term: by choosing~$\delta>0$ we can make~$w(F, 2 \delta)$ arbitrarily small. As a result, we arrive at
  \begin{equation}
    \label{eq:tightness_proof_bound_on_mod_of_cont}
    \liml_{\delta \to +0} \overline{\limundninf} \prob{w''(X_n,\delta) > \epsilon} = 0.
  \end{equation}

  The identity~\eqref{eq:tightness_proof_bound_on_mod_of_cont}, along with~\eqref{eq:tight_prop_eq1}--\eqref{eq:tight_prop_eq2}, implies the tightness (e.g., see~\cite[Theorem~13.2, Corollary, Theorem~13.3]{B1999book}) and completes the proof.
\end{proof}

\subsubsection{Behavior of~$S_n^{(1)}$}
\label{sec:beh_Sn1}
We treat each independent term in~\eqref{eq:split_Sn} separately, starting with $S_n^{(1)}$. In particular, we begin by looking into the behavior of~$U_{n,j}$ when~$\theta_{n,j} < 1-\delta$.
\begin{proposition}[Behavior of particles for~$\theta_{n,j} < 1-\delta$]
  \label{lem:behav_part_le1}
  Fix~$\delta \in (0,1)$. Then, 
  \begin{equation}
    \minl_{\theta_{n,j} < 1-\delta} U_{n,j} \limninfprob \infty,
  \end{equation}
  that is, for every~$\epsilon>0$
  \begin{equation}
    \prob{\minl_{\theta_{n,j} < 1-\delta} U_{n,j} < \frac{1}{\epsilon}} \limninf 0.
  \end{equation}
\end{proposition}
\begin{remark}
  Essentially, the proposition states that a portion of particles escapes to infinity with probability arbitrarily close to one.
\end{remark}
\begin{proof}
  Recall~\eqref{eq:conn_r_and_u}--\eqref{eq:theta_and_anj} and set
  \begin{equation}
    U_{n} = \minl_{\theta_{n,j} < 1-\delta} U_{n,j}, \quad R_{n} = \maxl_{\theta_{n,j} < 1-\delta} R_{n,j}.
  \end{equation}
  Clearly,
  \begin{equation}
    \label{eq:lem_del_les1_eq1}
    \prob{U_n < \frac{1}{\epsilon}} \le \prob{U_n < \frac{1}{\epsilon}, R_{n} \le 1-\frac{\delta}{2}} + \prob{R_{n} > 1-\frac{\delta}{2}}.
  \end{equation}
  
  For the first term on the right-hand side, the definition~\eqref{eq:conn_r_and_u} implies
  \begin{equation}
	  \prob{U_n < \frac{1}{\epsilon}, R_{n} \le 1-\frac{\delta}{2}} = \prob{-\frac{n \kappa}{b} \log{R_{n}} < \frac{1}{\epsilon}, R_{n} \le 1-\frac{\delta}{2}}.
  \end{equation}
  The~$R_{n,j}$ are bounded away from one, thus the latter probability becomes identically zero as soon as~$n$ is large enough and~$-\frac{n \kappa}{b} \log{(1-\delta/2)} \ge 1/\epsilon$ holds.

  For the second term, the union bound yields
  \begin{equation}
    \label{eq:lem_del_les1_eq11}
    \prob{R_{n} > 1-\frac{\delta}{2}} \le n \maxl_{\theta_{n,j} < 1-\delta} \prob{R_{n,j} > 1-\frac{\delta}{2}}.
  \end{equation}
  The latter probability can be calculated explicitly. From~\eqref{eq:dist_r}, we have
  \begin{equation}
    \label{eq:lem_del_les1_eq2}
    \prob{R_{n,j} > 1-\frac{\delta}{2}} = \frac{\intl_{1-\frac{\delta}{2}}^1 r^{n \rho^{2b}\theta_{n,j}-1} e^{-n \rho^{2b} r}\, dr}{\intl_0^1 r^{n \rho^{2b}\theta_{n,j}-1} e^{-n \rho^{2b}r}\, dr} = 1 - \frac{\gamma(n \rho^{2b} \theta_{n,j},n \rho^{2b}(1-\frac{\delta}{2}))}{\gamma(n \rho^{2b} \theta_{n,j} ,n \rho^{2b})},
  \end{equation}
  where~$\gamma(\cdot, \cdot)$ is the lower incomplete gamma-function,
  \begin{equation}
    \gamma(a,z) = \intl_0^z s^{a-1} e^{-s}\, ds.
  \end{equation}

  The asymptotic behavior of the last expression in~\eqref{eq:lem_del_les1_eq2} depends on the ratios
  \begin{equation}
    \lambda_{n,j} = \frac{n \rho^{2b}}{n \rho^{2b} \theta_{n,j}} = \frac{1}{\theta_{n,j}}, \quad \widetilde{\lambda}_{n,j} = \frac{n \rho^{2b}(1-\frac{\delta}{2})}{n \rho^{2b} \theta_{n,j}} = \frac{1-\frac{\delta}{2}}{\theta_{n,j}}.
  \end{equation}
  Observe that, for~${\theta_{n,j} < 1-\delta}$, the quantities~$\lambda_{n,j}$ and~$\widetilde{\lambda}_{n,j}$ stay bounded away from one, 
  \begin{equation}
    \lambda_{n,j} \wedge \widetilde{\lambda}_{n,j} > 1 + \frac{\delta}{2}.
  \end{equation}
  It follows from the known asymptotics of the incomplete gamma function, see Y.~Ameur, C.~Charlier, J.~Cronvall, and J.~Lenells~\cite[Lemma A.2(i)]{ACCL2022}, that there exist an index~$j_0$ and a small constant~$c>0$ such that for all~$j \ge j_0$ one has
  \begin{equation}
    \label{eq:lem_del_les1_eq3}
    \gamma(n \rho^{2b} \theta_{n,j} ,n \rho^{2b}) = \Gamma\Big(\frac{j + \alpha}{b}\Big) \big(1+ O(e^{-c n})\big)
  \end{equation}
  and
  \begin{equation}
    \label{eq:lem_del_les1_eq4}
    \gamma\bigg(n \rho^{2b} \theta_{n,j} ,n \rho^{2b}\Big(1-\frac{\delta}{2}\Big)\bigg) = \Gamma\Big(\frac{j + \alpha}{b}\Big) \big(1+ O(e^{-c n})\big)     
  \end{equation}
  as~$n \to \infty$, where~$\Gamma(\cdot)$ stands for the gamma function. Likewise, for~$j < j_0$, one can obtain formulas which asymptotically look just like the right-hand side of~\eqref{eq:lem_del_les1_eq3}--\eqref{eq:lem_del_les1_eq4}, see~\cite[Lemma A.1]{ACCL2022}. Plugging in the formulas into~\eqref{eq:lem_del_les1_eq2}, we end up with
  \begin{equation}
    \prob{R_{n,j} > 1-\frac{\delta}{2}} = O(e^{-c n})
  \end{equation}
  uniformly for all~$j=1,\ldots,n$ as~$n \to \infty$. Hence, by \eqref{eq:lem_del_les1_eq11} we certainly have
  \begin{equation}
    \prob{R_{n} > 1-\frac{\delta}{2}} \limninf 0,
  \end{equation}
  which completes the proof.
\end{proof}

The proposition we proved gives us a handle on the finite-dimensional convergence of~$S_n^{(1)}$, more precisely, of its shifted and normalized version.
\begin{proposition}
  \label{prop:Sn1_fidi}
  Let~$S_n^{(1)}$ be defined as in~\eqref{eq:def_of_Sn12}. Then,
  \begin{equation}
    \label{eq:prop_S1_conv_eq1}
    \sqrt{n}\Big(S_n^{(1)} - \mexp{S_n^{(1)}}\Big) \limninffidi 0.
  \end{equation}
  Moreover,
  \begin{equation}
    \label{eq:prop_S1_conv_eq2}
    \mexp{S_n^{(1)}} \limninf 0
  \end{equation}
  uniformly on compact sets in~$[0,+\infty)$.
\end{proposition}
\begin{proof}
  Denote the left-hand side of~\eqref{eq:prop_S1_conv_eq1} by~$X_n^{(1)}(t)$,
  \begin{equation}
    X_n^{(1)}(t) = \sqrt{n}\Big(S_n^{(1)}(t)-\mexp{S_n^{(1)}(t)}\Big).
  \end{equation}

  We will use Cram\'{e}r--Wold's device to prove the finite dimensional convergence of~$X_n^{(1)}$. To that end, set
  \begin{equation}
    \widetilde{S}^{(1)}_n = \suml_{k=1}^\ell c_k S_n^{(1)}(t_k), \ \widetilde{X}^{(1)}_n = \suml_{k=1}^\ell c_k X_n^{(1)}(t_k),
  \end{equation}
  for some~$c_k \in \mathbb{R}$ and~$t_k \in [0,+\infty)$.
  
  For convenience, define
  \begin{equation}
    A_{n,j} = \phi(U_{n,j}) \suml_{k=1}^\ell c_k\,  \mathds{1}[U_{n,j} \le t_k], \quad A_n = \maxl_{\theta_{n,j} \le 1-\delta} A_{n,j}.
  \end{equation}
  We will establish convergence in mean square,
  \begin{equation}
    \label{eq:prop_fidi_proof_eq2}
    \widetilde{X}^{(1)}_n\limninfms 0.
  \end{equation}

  Take arbitrary~$\delta \in (0,1)$. The independence allows us to write
  \begin{equation}
    \label{eq:prop_fidi_proof_eq111}
    \mexp{\big(\widetilde{X}^{(1)}_n\big)^2} =\frac{1}{n}\suml_{\theta_{n,j} \le 1-\delta} \!\mexp{\big(A_{n,j} - \mexp{A_{n,j}}\big)^2} + \frac{1}{n}\suml_{ 1-\delta < \theta_{n,j}  \le 1} \!\mexp{\big(A_{n,j} - \mexp{A_{n,j}}\big)^2}.
  \end{equation}
  
  Since the~$A_{n,j}$ are all bounded, observe that
  \begin{equation}
    \label{eq:prop_fidi_proof_eq112}
    \frac{1}{n}\suml_{1-\delta < \theta_{n,j}  \le 1} \mexp{\big(A_{n,j} - \mexp{A_{n,j}}\big)^2} \le C \delta
  \end{equation}
  for some constant~$C>0$ independent of~$\delta$ or~$n$.
  
  The first term in~\eqref{eq:prop_fidi_proof_eq111} admits the following estimate,
  \begin{equation}
    \label{eq:prop_fidi_proof_eq1}
    \frac{1}{n}\suml_{\theta_{n,j} \le 1-\delta} \mexp{\big(A_{n,j} - \mexp{A_{n,j}}\big)^2} \le \mexp{A_{n}^2;  U_n\ge  L} + \widetilde{C}\, \prob{U_n< L}
  \end{equation}
  for some constant~$\widetilde{C}>0$ independent of~$n$, where
  \begin{equation}
    U_n = \minl_{\theta_{n,j} \le 1-\delta} U_{n,j}.
  \end{equation}

  We choose~$L>0$ greater than~$t_1,\ldots,t_\ell$, then the expectation~$\mexp{A_{n}^2;  U_n\ge  L}$ becomes zero for all~$n$. We note in passing that this would not be the case, had we allowed for~$t_\ell=+\infty$.

  Next, we choose~$n$ large enough to make~$\prob{U_n< L}$ arbitrarily small by Proposition~\ref{lem:behav_part_le1}. This means that the left-hand side of the inequality~\eqref{eq:prop_fidi_proof_eq1} vanishes as~$n \to \infty$. Passing to the limit in~\eqref{eq:prop_fidi_proof_eq111}, we arrive at
  \begin{equation}
    \overline{\limundninf} \mexp{\big(\widetilde{X}^{(1)}_n\big)^2} \le C \delta.
  \end{equation}
  Since~$\delta$ can be arbitrarily small, we conclude that~\eqref{eq:prop_fidi_proof_eq2} holds. The desired finite-dimensional convergence is proven.
  
  It remains to show~\eqref{eq:prop_S1_conv_eq2}. Fix a small~$\delta>0$. By splitting the sum as in~\eqref{eq:prop_fidi_proof_eq111} and carrying out elementary estimates, one finds
  \begin{equation}
	  \supl_{t \in [0,L]}\left|\mexp{S_n^{(1)}(t)}\right| \le M \big(\prob{U_n \le L} + \delta\big). 
  \end{equation}
  where~$M = \supl_{x \ge 0} |\phi(x)|$. Once again, applying Proposition~\ref{lem:behav_part_le1} and recalling that~$\delta$ can be arbitrarily small conclude the argument.
\end{proof}

\subsubsection{Behavior of~$S_n^{(2)}$}
\label{sec:beh_Sn2}
Our next goal is to study the second term in~\eqref{eq:split_Sn}. This requires analyzing~$U_{n,j}$ for~$\theta_{n,j}>1$. We start by proving a simple technical lemma.
\begin{lemma}
  \label{lem:tv_bound}
  Let~$\mathbb{P}_1$ be a probability measure on a measurable space~$(\Omega, \mathcal{F})$, and let~$w$ be a $\mathbb{P}_1$-a.s. positive function integrable with respect to~$\mathbb{P}_1$. Define another probability measure by
  \begin{equation}
    \label{eq:def_of_P2}
    \mathbb{P}_2= \frac{w\, \mathbb{P}_1}{\intl_\Omega w\, d\mathbb{P}_1}.
  \end{equation}
  Then,
  \begin{equation}
    \label{eq:tv_dist_bound}
    d_{\mathrm{TV}}(\mathbb{P}_1, \mathbb{P}_2) \le 2 \intl_\Omega |1-w|\, d\mathbb{P}_1,
  \end{equation}
  where~$d_{\mathrm{TV}}(\cdot,\cdot)$ stands for the total variation distance between probability measures.
\end{lemma}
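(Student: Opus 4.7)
The plan is to reduce everything to a direct calculation with the Radon--Nikodym derivative. Set $Z = \int_\Omega w \, d\mathbb{P}_1$, which is strictly positive and finite by hypothesis; then the Radon--Nikodym derivative of $\mathbb{P}_2$ with respect to $\mathbb{P}_1$ is $w/Z$, and the total variation distance takes the explicit form $d_{\mathrm{TV}}(\mathbb{P}_1, \mathbb{P}_2) = \int_\Omega |1 - w/Z| \, d\mathbb{P}_1$ (with a harmless extra factor of $1/2$ if the alternative normalisation of $d_{\mathrm{TV}}$ is preferred; the argument below is unaffected).

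The key step is the elementary algebraic identity $1 - w/Z = (1 - w) + w \cdot (1 - 1/Z)$, which isolates the two sources of deviation: $w$ being off $1$, and $Z$ being off $1$. Combined with $w > 0$ $\mathbb{P}_1$-a.s. and the triangle inequality, this bounds the integrand pointwise by $|1 - w| + w \cdot |Z - 1|/Z$. Integrating against $\mathbb{P}_1$ and using $\int w \, d\mathbb{P}_1 = Z$, the second summand integrates to exactly $|Z - 1|$, so $\int |1 - w/Z| \, d\mathbb{P}_1 \le \int |1 - w| \, d\mathbb{P}_1 + |Z - 1|$.

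To finish, I would apply Jensen's inequality to the constant term: $|Z - 1| = \bigl|\int (w - 1) \, d\mathbb{P}_1 \bigr| \le \int |w - 1| \, d\mathbb{P}_1$. Combining this with the previous display gives the desired bound $d_{\mathrm{TV}}(\mathbb{P}_1, \mathbb{P}_2) \le 2 \int |1 - w| \, d\mathbb{P}_1$.

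There is no substantive obstacle. The only non-obvious input is the decomposition of $1 - w/Z$, which is arranged precisely so that the factor of $2$ in the statement arises transparently as the sum of two separate contributions (the pointwise deviation $|1 - w|$ and the normalisation deviation $|Z - 1|$, each controlled by $\int |1 - w| \, d\mathbb{P}_1$), rather than from a less pleasant case analysis on the size of $Z$.
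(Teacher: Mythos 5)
Your proposal is correct and is essentially the paper's own argument: both reduce $d_{\mathrm{TV}}$ to the $L^1(\mathbb{P}_1)$ distance $\int |1 - w/Z|\, d\mathbb{P}_1$ (the paper via the dual formulation over test functions $|\phi|\le 1$), and both use the same decomposition $1 - w/Z = (1-w) + \tfrac{w}{Z}(Z-1)$ together with $|Z-1| \le \int |1-w|\, d\mathbb{P}_1$ to obtain the factor $2$. The only cosmetic difference is the order in which the bound on $|Z-1|$ is applied (inside the pointwise triangle inequality in the paper, after integration in your write-up).
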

\begin{proof}
  Recall an equivalent definitions of the total variation distance,
  \begin{equation}
    d_{\mathrm{TV}}(\mathbb{P}_1, \mathbb{P}_2) = \maxl_{|\phi| \le 1} \left|\intl_{\Omega}\phi\, d\mathbb{P}_1 - \intl_{\Omega} \phi\, d\mathbb{P}_2  \right|,
  \end{equation}
  where the maximum is taken over all measurable functions bounded in absolute value by the unit. Plugging in~\eqref{eq:def_of_P2}, we find
  \begin{equation}
    d_{\mathrm{TV}}(\mathbb{P}_1, \mathbb{P}_2)  \le  \intl_\Omega \left|1-\frac{w}{\intl_\Omega w\, d\mathbb{P}_1}\right| d\mathbb{P}_1.
  \end{equation}

  The triangle inequality implies
  \begin{equation}
    \left|1-\frac{w}{\intl_\Omega w\, d\mathbb{P}_1}\right| \le |1-w| + \frac{w}{\intl_\Omega w\, d\mathbb{P}_1}\intl_\Omega |1-w|\, d\mathbb{P}_1,
  \end{equation}
  and thus~\eqref{eq:tv_dist_bound} follows.
\end{proof}

Our next goal is to introduce the key component of the further analysis, an approximation of the~$U_{n,j}$ with exponential random variables. We will also show that the~$U_{n,j}$ can be coupled on the same probability space with their approximants.

\begin{proposition}[Exponential approximation of the~$U_{n,j}$ for~$\theta_{n,j}> 1 + \delta$]
  \label{lem:behav_part_ge1}
  Let~$\psi$ be a bounded measurable function. Define the~$U_{n,j}$, $j=1,\ldots,n$, as in~\eqref{eq:conn_r_and_u} -- \eqref{eq:theta_and_anj}, and let the~$E_{n,j}$ be exponential random variables distributed according to
  \begin{equation}
    \label{eq:lem_exp_appr_eq1}
    \mathbb{P}_{n,j}(dx) = f_{n,j}(x)\, dx,\quad f_{n,j}(x) = \frac{b \rho^{2 b}}{\kappa}(\theta_{n,j}-1) e^{-\frac{b \rho^{2 b}}{\kappa}(\theta_{n,j}-1) x}, \quad x \ge 0,
  \end{equation} 
  where~$\theta_{n,j}$ is defined in~\eqref{eq:theta_and_anj}. Then, the $U_{n,j}$ and the~$E_{n,j}$, $j=1,\ldots,n$, $n \in \mathbb{N}$, can be coupled on the same probability space~$(\Omega, \mathcal{F}, \mathcal{P})$ in such a way that
  \begin{equation}
    \label{eq:lem_exp_appr_l2conv}
    \maxl_{\theta_{n,j} > 1+\delta} \mexp{(\psi(U_{n,j})- \psi(E_{n,j}))^2} \limninf 0.
  \end{equation}
\end{proposition}
\begin{remark}
  Although, the procedure of coupling involves enriching the original probability space and thus changing the original random variables, we keep the same notation for the sake of convenience, as it is customary. Moreover, without loss of generality, we assume that~$(\Omega, \mathcal{F}, \mathcal{P})$ in the theorem is the same space where all the determinantal point processes~$\left\{\mathscr{Z}_n\right\}$ live. This, in particular, will enable us to use the symbols~$\mathcal{P} [ \cdot ]$ and~$\mexp{\cdot}$ in the further exposition and avoid unnecessary complications.
\end{remark}
\begin{remark}
  The fact that~$\psi(U_{n,j})$ and~$\psi(E_{n,j})$ are close in the m.s. sense will drastically simplify the proof of Theorem~\ref{thm:thm1}, since m.s. convergence ``plays well'' with independent random variables, such as the~$\psi(U_{n,j})$ and the~$\psi(E_{n,j})$.
\end{remark}

\begin{proof}
  \noindent\textbf{a)} First, we will show that for every~$\delta>0$, one has
  \begin{equation}
    \label{eq:lem_exp_appr_eq1_first_claim}
    \maxl_{\theta_{n,j} > 1+\delta}d_{\mathrm{TV}}(U_{n,j}, E_{n,j}) \limninf 0.
  \end{equation}
  The probability distribution~$\mathbb{Q}_{n,j}$ of~$U_{n,j}$ from~\eqref{eq:dist_r} can be written as
  \begin{equation}
    \mathbb{Q}_{n,j} = \frac{w_n\, \mathbb{P}_{n,j}}{\intl_0^{+\infty} w_n\, d\mathbb{P}_{n,j}}
  \end{equation}
  with
  \begin{equation}
    w_n(x) = \exp{\left(-n \rho^{2b}\left(e^{-\frac{b x}{\kappa n}}-1+\frac{b x}{\kappa n}\right)\right)}.
  \end{equation}

  Lemma~\ref{lem:tv_bound} implies
  \begin{equation}
    \label{eq:lemma_exp_approx_eq1}
    \begin{aligned}
      &\maxl_{\theta_{n,j} > 1+\delta} d_\mathrm{TV}(U_{n,j}, E_{n,j})  \le 2 \maxl_{\theta_{n,j} > 1+\delta} \intl_{0}^{+\infty} |1-w_n(x)|\, f_{n,j}(x) \, dx.
    \end{aligned}
  \end{equation}
  Notice that
  \begin{equation}
    \label{eq:bound_omega}
    0 < w_n(x) \le 1,\quad w_n(x) \limninf 1,
  \end{equation}
  for~$x \ge 0$. Observe that the quantities~$\theta_{n,j}$ in~\eqref{eq:theta_and_anj} are always bounded from above for~$j=1,\ldots,n$; and since~$\theta_{n,j} > 1+\delta$, they are also bounded from below. This ensures
  \begin{equation}
    \label{eq:bound_on_fnj}
    |f_{n,j}(x)| \le C e^{-h x}, \quad x \ge 0,
  \end{equation}
  for some constants~$C>0$ and~$h>0$ independent of~$n$, $j$, or~$x$. The dominated convergence theorem applied to~\eqref{eq:lemma_exp_approx_eq1} establishes~\eqref{eq:lem_exp_appr_eq1_first_claim}.
  \\[2ex]
  \noindent\textbf{b)} Recall that, given~$n \in \mathbb{N}$, the random variables~$U_{n,j}, j=1,\ldots, n$, are independent. By the very nature of the problem at hand, the coupling for different~$n$ is not given or even relevant. We can choose any coupling we would like, however for simplicity and consistency we choose the \textit{independent} coupling. The same comment applies to the~$E_{n,j}$.

  Below, our goal is to couple on the same probability space the family~$\{U_{n,j}\}$ with the family~$\{E_{n,j}\}$, ensuring that the random variables~$\psi(U_{n,j})$ and~$\psi(E_{n,j})$ are close in the mean-square sense. Provided one can couple~$U_{n,j}$ with~$E_{n,j}$ in such a manner for fixed indexes~$n$ and~$j \in \{1, \ldots n\}$, the obtained couplings can be joined independently, resulting in a coupling for the full families. 

  We see that the problem reduces to constructing the coupling with given properties for fixed~$n$ and~$j$. One can think of the following direct route, first couple $U_{n,j}$ with~$E_{n,j}$ in such a way that they are close, then deduce that~$\psi({U}_{n,j})$ and~$\psi({E}_{n,j})$ are automatically close as well. Regrettably, this would require assuming additional regularity of~$\psi$, which is against our goals as we specifically focus on \textit{generic} bounded measurable functions~$\psi$. To find a way around this obstacle, we will pursue an indirect route.

Instead of the original random variables~$U_{n,j}$ and~$E_{n,j}$, we will show that~$\widetilde{U}_{n,j}$ and~$\widetilde{E}_{n,j}$, defined by
  \begin{equation}
	  \widetilde{U}_{n,j} \df \psi(U_{n,j}), \quad     \widetilde{E}_{n,j} \df \psi(E_{n,j}),
  \end{equation}
  can be coupled in such a way that the mean-square distance between them is small. Then, since the random variables $U_{n,j}$ and $\widetilde{U}_{n,j}$ are coupled in a natural way on a common probability space, relying on a well-known procedure (e.g., see~\cite[Lemma A1]{BP1979}), one can couple~$U_{n,j}, \widetilde{U}_{n,j}, \widetilde{E}_{n,j}$ on a common probability space by choosing~$U_{n,j}$ and~$\widetilde{E}_{n,j}$ to be conditionally independent given~$\widetilde{U}_{n,j}$. Now, $E_{n,j}$ and~$\widetilde{E}_{n,j}$  are also coupled in a natural way, thus we can apply the procedure from above to couple~$U_{n,j}, \widetilde{U}_{n,j}, \widetilde{E}_{n,j}, E_{n,j}$ by choosing~$U_{n,j}, \widetilde{U}_{n,j}$ and~$E_{n,j}$ to be conditionally independent given~$\widetilde{E}_{n,j}$. This defines all the four random variables~$U_{n,j}, E_{n,j}, \widetilde{U}_{n,j}$, and~$\widetilde{E}_{n,j}$ on a common probability space, and~$\widetilde{U}_{n,j} \eqas \psi(U_{n,j})$ and~$\widetilde{E}_{n,j} \eqas \psi(E_{n,j})$ are going to be close to each other by construction. We emphasize that the obtained coupling \textit{depends} on the choice of~$\psi$ (as opposed to the other approach, which we discarded to avoid additional regularity conditions). This is, however, sufficient for our goals, e.g., to prove Proposition~\ref{prop:fidi_conv} below. Once again, as it is customary, we use the same notation for the original and new (coupled) random variables. 

  Now, we just need to show that~$\widetilde{U}_{n,j}$ can be coupled with~$\widetilde{E}_{n,j}$ so that~\eqref{eq:lem_exp_appr_l2conv} is satisfied. Due to~\eqref{eq:lem_exp_appr_eq1_first_claim} there exists a strictly increasing sequence~$\{n_k \}_{k \in \mathbb{N}}$ of natural numbers such that for all~$n$ satisfying~$n > n_1$ one has
  \begin{equation}
	  \maxl_{\theta_{n,j} > 1+\delta}d_{\mathrm{TV}}(U_{n,j}, E_{n,j}) < \varepsilon_n,
  \end{equation}
  where~$\varepsilon_k = \frac{1}{m+1}$ for~$k \in \{n_m, n_m+1, \ldots, n_{m+1}-1 \}$, $m \in \mathbb{N}$. The definition of the total variation distance yields
  \begin{equation}
	  d_{\mathrm{TV}}(\widetilde{U}_{n,j}, \widetilde{E}_{n,j}) \le d_\mathrm{TV}(U_{n,j}, E_{n,j}),
  \end{equation}
and the distance~$d_{\mathrm{TV}}(\widetilde{U}_{n,j}, \widetilde{E}_{n,j} )$, in turn, bounds the Prohorov distance~$\pi(\widetilde{U}_{n,j}, \widetilde{E}_{n,j} )$, the infimum of~$\epsilon>0$ for which
  \begin{equation}
    \prob{\widetilde{U}_{n,j} \in B} \le \prob{\widetilde{E}_{n,j} \in B^{\epsilon}} + \epsilon, \quad \prob{\widetilde{E}_{n,j} \in B} \le \prob{\widetilde{U}_{n,j} \in B^{\epsilon}} + \epsilon
  \end{equation}
  hold for all Borel sets~$B$, where~$B^{\epsilon}$ denotes the~$\epsilon$-neighborhood of~$B$. For the proof of this fact see P. J.~Huber~\cite[p. 34]{H1981book}.

  Combining the above facts, we arrive at
  \begin{equation}
	  \maxl_{\theta_{n,j} > 1+\delta} \pi(\widetilde{U}_{n,j}, \widetilde{E}_{n,j} ) \le \maxl_{\theta_{n,j} > 1+\delta} d_\mathrm{TV}(U_{n,j}, E_{n,j}) < \varepsilon_n
  \end{equation}
  for~$n > n_1$.
  Due to a well-known result, e.g., see P.~Billingsley~\cite[Theorem 6.9]{B1999book}, we can couple~$\widetilde{U}_{n,j}$ and~$\widetilde{E}_{n,j}$ on the same probability space in such a way that
  \begin{equation}
    \label{eq:coupling_bound1}
    \maxl_{\theta_{n,j} > 1+\delta}  \prob{|\widetilde{U}_{n,j} - \widetilde{E}_{n,j}|>\varepsilon_n} < \varepsilon_n.
  \end{equation}
  For~$n \le n_1$ we use the independent coupling. Also, we mention an intentional (slight) abuse of notation. Whenever we discuss the total variation or Prohorov distance ``between random variables,'' we are actually referring to their distributions. We believe this does not cause any confusion and only helps with comprehension.

  To establish~\eqref{eq:lem_exp_appr_l2conv}, we set
  \begin{equation}
	  A_{n,j} \df |\widetilde{U}_{n,j} - \widetilde{E}_{n,j}|,
  \end{equation}
and observe that
  \begin{equation}
    \label{eq:exp_appr_final_eq}
    \begin{aligned}
      \maxl_{\theta_{n,j} > 1+\delta} \mexp{A_{n,j}^2} &\le \maxl_{\theta_{n,j} > 1+\delta}\left(\mexp{A_{n,j}^2; A_{n,j}>\epsilon_n} + \mexp{A_{n,j}^2; A_{n,j} \le \epsilon_n}\right)\\
      &\le C \maxl_{\theta_{n,j} > 1+\delta} \prob{A_{n,j}>\epsilon_n} + \epsilon_n^2
    \end{aligned}
  \end{equation}
  for some constant~$C>0$ independent of~$n$ or~$j$. Due to~\eqref{eq:coupling_bound1}, the formula~\eqref{eq:exp_appr_final_eq} becomes
  \begin{equation}
    \label{eq:exp_appr_final_eq1}
    \maxl_{\theta_{n,j} > 1+\delta} \mexp{A_{n,j}^2} < C \epsilon_n + \epsilon_n^2
  \end{equation}
  for all~$n$ satisfying~$n>n_1$. Passing to the limit as~$n \to \infty$ and noticing that~$\varepsilon_n \to 0$, we arrive at~\eqref{eq:lem_exp_appr_l2conv}. This completes the proof.
\end{proof}

We are ready to prove finite-dimensional convergence for~$S_n^{(2)}$.
\begin{proposition}
  \label{prop:fidi_conv}
  Assume that the conditions of Theorem~\ref{thm:thm1} are satisfied. Then, the following finite-dimensional convergence takes place,
  \begin{equation}
    \label{eq:fidi_prop_eq0}
    \sqrt{n} \Big(S_n^{(2)}-\mexp{S_n^{(2)}}\Big) \limninffidi G.
  \end{equation}
  Moreover,
  \begin{equation}
    \label{eq:fidi_prop_eq1}
    \mexp{S_n^{(2)}} \limninf m_1
  \end{equation}
  uniformly on~$[0,+\infty]$.
\end{proposition}

\begin{remark}
  Note that unlike in Proposition~\ref{prop:Sn1_fidi}, the convergence~\eqref{eq:fidi_prop_eq1} is uniform on the whole interval~$[0,+\infty]$. Besides, we can take one of the times to be~$+\infty$ in~\eqref{eq:fidi_prop_eq0}.
\end{remark}
\begin{proof}
  Write
  \begin{equation}
    S_n^{(2)} = S_n^{(2,1)} + S_n^{(2,2)},
  \end{equation}
  where
  \begin{equation}
    \begin{aligned}
      &S_n^{(2,1)} (t)= \frac{1}{n}\suml_{\theta_{n,j} > 1} \Big(\phi(U_{n,j})\, \mathds{1}[U_{n,j} \le t] - \phi(E_{n,j})\, \mathds{1}[E_{n,j} \le t]\Big), \\
      &S_n^{(2,2)} (t)= \frac{1}{n}\suml_{\theta_{n,j} > 1} \phi(E_{n,j})\, \mathds{1}[E_{n,j} \le t].
    \end{aligned}
  \end{equation}
  Set
  \begin{equation}
    X_n^{(2,j)}(t) = \sqrt{n}\Big(S_n^{(2,j)}(t)-\mexp{S_n^{(2,j)}(t)}\Big), \quad j=1,2.
  \end{equation}

  As earlier in the proof of Proposition~\ref{prop:Sn1_fidi}, we will rely on Cram\'{e}r--Wold's device. Define
  \begin{equation}
    \widetilde{S}^{(2,j)}_n = \suml_{k=1}^\ell c_k S_n^{(2,j)}(t_k), \quad \widetilde{X}^{(2,j)}_n = \suml_{k=1}^\ell c_k X_n^{(2,j)}(t_k),\quad j=1,2.
  \end{equation}
  for some~$c_k \in \mathbb{R}$.
  \\[1ex]
  \noindent \textbf{a)} First, we will establish that
  \begin{equation}
    \label{eq:thm_fidi_proof_eq2}
    \widetilde{X}^{(2,1)}_n\limninfms 0.
  \end{equation}
  Set
  \begin{equation}
    A_{n,j} = \suml_{k=1}^\ell c_k \Big(\phi(U_{n,j}) \mathds{1}[U_{n,j} \le t_k]-\phi(E_{n,j}) \mathds{1}[E_{n,j} \le t_k]\Big).
  \end{equation}
  Take an arbitrary~$\delta \in (0,1)$. The independence yields
  \begin{equation}
    \begin{aligned}
      \mexp{\left(\widetilde{X}^{(2,1)}_n\right)^2} =&\frac{1}{n}\suml_{\theta_{n,j} > 1+\delta} \mexp{\big(A_{n,j} - \mexp{A_{n,j}}\big)^2} \\
      &+ \frac{1}{n}\suml_{ 1+\delta \ge \theta_{n,j}  > 1} \mexp{\big(A_{n,j} - \mexp{A_{n,j}}\big)^2}.    
    \end{aligned}
  \end{equation}

  Just like in the proof of Proposition~\ref{prop:Sn1_fidi}, we find that the second term is bounded by~$C \delta$, with some constant~$C>0$ independent of~$n$ or~$j$. We need only prove that the first term can be made arbitrarily small by choosing~$n$. Note that
  \begin{equation}
    \frac{1}{n}\suml_{\theta_{n,j} > 1+\delta} \mexp{\big(A_{n,j} - \mexp{A_{n,j}}\big)^2} \le \maxl_{\theta_{n,j} > 1+\delta} \mexp{A_{n,j}^2},
  \end{equation}
  and Proposition~\ref{lem:behav_part_ge1} shows that the right-hand side vanishes in the limit~$n \to \infty$. We have~\eqref{eq:thm_fidi_proof_eq2}. A similar (simpler) argument gives
  \begin{equation}
    \label{eq:thm_fidi_proof_eq13}
    \mexp{{S}_n^{(2,1)}(t)} \limninf 0
  \end{equation}
  for any fixed~$t \ge 0$ (including~$t = +\infty$).
  \\[1ex]
  \noindent \textbf{b)} The next step is to prove that
  \begin{equation}
    \widetilde{X}^{(2,2)}_n \limninffidi G,
  \end{equation}
  where~$G$ is defined in the premise of Theorem~\ref{thm:thm1}.

  Observe that~$\widetilde{X}^{(2,2)}$ only involves the exponential random variables~$E_{n,j}$, whose dependence on~$n$ and~$j$ is of a simple form. To proceed further, we will rely on some explicit calculations. Recall from~\eqref{eq:theta_and_anj} that~$\theta_{n,j}= \frac{j + \alpha}{bn\rho^{2b}}$, and write
  \begin{equation}
	  \label{eq:3_109}
	  \mexp{S_n^{(2,2)}(t)} =  \frac{1}{n}\suml_{\theta_{n,j} > 1} \intl_0^t \phi(x) f_{n,j}(x) \, dx,
  \end{equation}
  where~$f_{n,j}(x)$ is defined in~\eqref{eq:lem_exp_appr_eq1}.
  Observe that
  \begin{equation}
	  \frac{1}{n} \suml_{\theta_{n,j} > 1} f_{n,j}(x)=\frac{1}{n}\suml_{\theta_{n,j} > 1} \frac{b \rho^{2 b}}{\kappa}(\theta_{n,j}-1) e^{-\frac{b \rho^{2 b}}{\kappa}(\theta_{n,j}-1) x},
  \end{equation}
  up to a finite number of terms which only contribute the amount of~$O(1/n)$, represents the Riemann sum for the integral
  \begin{equation}
	  b \rho^{2b} \intl_1^{1/(b \rho^{2b})} \frac{b \rho^{2 b}}{\kappa}(s-1) e^{-\frac{b \rho^{2 b}}{\kappa}(s-1) x}\, ds= \kappa \intl_0^1 s e^{-s x} \, ds = \kappa\, \omega_1(x).
  \end{equation}
  Thus, by changing the order of the integral and the limit for a finite~$t \ge 0$, we get
  \begin{equation}
    \label{eq:thm_fidi_proof_eq14}
    \limundninf \mexp{S_n^{(2,2)}(t)}  = m_1(t).
  \end{equation}

  To handle the case~$t = +\infty$, set
  \begin{equation}
	  H_n(x) = \frac{1}{n} \suml_{\theta_{n,j} > 1} f_{n,j}(x) - \kappa\, \omega_1(x),
    \end{equation}
    fix a small~$\delta>0$ and introduce
  \begin{equation}
	  H_n^{(1)}(x) =\frac{1}{n} \suml_{1+\delta \ge \theta_{n,j} > 1} f_{n,j}(x) - \kappa \intl_0^{\frac{b \rho^{2b}}{\kappa}\delta} s e^{-sx}\, ds
  \end{equation}
  and
  \begin{equation}
	  H_n^{(2)}(x) = \frac{1}{n} \suml_{\theta_{n,j} > 1+\delta} f_{n,j}(x) - \kappa \intl^1_{\frac{b \rho^{2b}}{\kappa}\delta} s e^{-sx}\, ds,
  \end{equation}
  and notice that
  \begin{equation}
     H_n = H_n^{(1)} + H_n^{(2)}.
  \end{equation}
  Write
  \begin{equation}
    \label{eq:thm_fidi_proof_eq22}
      \mexp{S_n^{(2,2)}(+\infty)} - m_1(+\infty) = \intl_0^{+\infty} \phi(x) H_n^{(1)}(x) \, dx + \intl_0^{+\infty} \phi(x) H_n^{(2)}(x) \, dx,
  \end{equation}
  and observe that, since~$\phi$ is bounded and the infinite integral is convergent, one has
  \begin{equation}
    \left| \intl_0^{+\infty} \phi(x) H_n^{(1)}(x) \, dx \right| \le C \delta,
  \end{equation}
  for some constant~$C>0$ independent of~$\delta$ and~$n$. Further, having in mind the bound~\eqref{eq:bound_on_fnj}, the dominated convergence implies that the second term on the right-hand side of~\eqref{eq:thm_fidi_proof_eq22} vanishes as~$n \to \infty$.

  We arrive at
  \begin{equation}
    \overline{\limundninf} \Big|\mexp{S_n^{(2,2)}(+\infty)} - m_1(+\infty)\Big| \le C \delta.
  \end{equation}
  Since~$\delta>0$ can be arbitrarily small, we find~\eqref{eq:thm_fidi_proof_eq14} with~$t = +\infty$. Similar calculations can be carried out for~$\widetilde{S}_n^{(2,2)}$, and it remains to analyze the covariance. 

  To that end, we use the independence to find that
  \begin{equation}
	  \begin{aligned}
	  	\mexp{X^{(2,2)}_n(t_1) X^{(2,2)}_n(t_2)} = &\frac{1}{n} \suml_{\theta_{n,j} > 1} \int \limits_{0}^{t_1 \wedge t_2} \phi^2(x) f_{n,j}(x) \, dx\\
		&- \frac{1}{n} \suml_{\theta_{n,j} > 1} \int \limits_{0}^{t_1} \int \limits_{0}^{t_2} \phi(x_1) \phi(x_2) f_{n,j}(x_1) f_{n,j}(x_2)\, dx_2 dx_1.
	  \end{aligned}
  \end{equation}

  The first term is of similar form to~\eqref{eq:3_109} and can be treated accordingly. The second term involves
  \begin{equation}
	  \frac{1}{n} \suml_{\theta_{n,j} > 1} f_{n,j}(x_1) f_{n,j}(x_2) = \frac{1}{n} \suml_{\theta_{n,j} > 1} \frac{b^2 \rho^{4 b}}{\kappa^2}(\theta_{n,j}-1)^2 e^{-\frac{b \rho^{2 b}}{\kappa}(\theta_{n,j}-1) (x_1+x_2)},
  \end{equation}
  which is the Riemann sum for
  \begin{equation}
	  \begin{aligned}
	  	b \rho^{2b} \int \limits_{1}^{1/(b \rho^{2b})} \frac{b^2 \rho^{4 b}}{\kappa^2}(s-1)^2 e^{-\frac{b \rho^{2 b}}{\kappa}(s-1) (x_1+x_2)}\, ds &= \kappa \intl_0^1 s^2 e^{-s (x_1+x_2)} \, ds\\
		&= \kappa\, \omega_2(x_1+x_2),
	  \end{aligned}
  \end{equation}
  as before up to a finite number of terms which only contribute the amount of~$O(1/n)$. The case when~$t_1 =+\infty$ or~$t_2=+\infty$ can be addressed as above and does not involve any new ideas. Consequently, one finds 
  \begin{equation}
    \label{eq:thm_fidi_proof_eq15}
    \begin{aligned}
      \limundninf \mexp{X^{(2,2)}_n(t_1) X^{(2,2)}_n(t_2)} =  m_2(t_1 \wedge t_2)  - m_{12}(t_1,t_2), \quad t_1,t_2 \in [0,+\infty],
    \end{aligned}
  \end{equation}
  and similar calculations carry over to~$\widetilde{X}_n^{(2,2)}$.

  If~$\limundninf \mexp{\left(\widetilde{X}_n^{(2,2)}\right)^2} = 0$, we trivially obtain~$\widetilde{X}_n^{(2,2)} \limninfprob 0$. Otherwise, Lyapunov's theorem for uniformly bounded random variables (e.g., see A.~N.~Shiryaev~\cite[Section III.\S4.I.2.]{S1966book}) implies finite-dimensional convergence to a Gaussian law with the covariance function~\eqref{eq:thm_fidi_proof_eq15}. Further, it follows from~\eqref{eq:thm_fidi_proof_eq2} and~\eqref{eq:thm_fidi_proof_eq15} that the limit of~$\sqrt{n}\Big(S_n^{(2)} - \mexp{S_n^{(2)}}\Big)$ is indeed a Gaussian process with the announced covariance function.

  Collecting the formulas~\eqref{eq:thm_fidi_proof_eq13} and~\eqref{eq:thm_fidi_proof_eq14}, we also conclude that
  \begin{equation}
    \label{eq:thm_fidi_proof_eq16}
    \mexp{S_n^{(2)}(t)} \limninf m_1(t),\quad  t \in [0, +\infty].
  \end{equation}
  \noindent \textbf{c)} The last step is to prove that~\eqref{eq:thm_fidi_proof_eq16} holds uniformly for all~$t \in [0,+\infty]$ at once. One can always represent~$\phi(x)$ as a difference of its positive and negative part, $\phi^{+}(x) = (\phi(x)) \vee 0$ and~$\phi^{-}(x) = (- \phi(x)) \vee 0$. Thus, we can think that~$\phi \ge 0$, without loss of generality. In this case, $\mexp{S_n^{(2)}}$ becomes a non-decreasing function. Set
  \begin{equation}
    \hat{F}_n(t) = \frac{\suml_{\theta_{n,j} > 1} \mexp{\phi(U_{n,j}) \mathds{1}[U_{n,j}\le t]}}{\suml_{\theta_{n,j} > 1} \mexp{\phi(U_{n,j})}}.
  \end{equation}
  The latter is a probability distribution function, which due to~\eqref{eq:thm_fidi_proof_eq16} converges pointwise,
  \begin{equation}
    \label{eq:thm_fidi_proof_eq17}
    \hat{F}_n(t) \limninf \hat{F}(t) = \frac{m_1(t)}{m_1(+\infty)},
  \end{equation}
  to another \textit{continuous} probability distribution function~$\hat{F}(t)$. This means that this convergence is uniform (e.g., see I.~A.~Ibragimov and Yu.~V.~Linnik~\cite[Lemma 5.1.1]{LI1971book}). Finally, the uniform convergence~\eqref{eq:thm_fidi_proof_eq17} implies that~$\mexp{S_n^{(2)}} \limninf m_1$ holds uniformly on~$[0,+\infty]$ as desired.
\end{proof}

\subsection{Proof of Theorem~\ref*{thm:thm1}}
\label{sec:proof_thm1}
\begin{proof}
  We already established finite-dimensional convergence in Proposition~\ref{prop:Sn1_fidi} and Proposition~\ref{prop:fidi_conv} for each term of~\eqref{eq:split_Sn}. Now, we are going to combine these results with the tightness from Lemma~\ref{lem:tightness}.
  \\[1ex]
  \noindent \textbf{a)} First, we will deal with~$S_n^{(1)}$. Define
  \begin{equation}
    F_n^{(1)}(t) = \frac{1}{n} \suml_{\theta_{n,j} \le 1} \prob{U_{n,j} \le t}.
  \end{equation}
  Set~$\phi = 1$ in Proposition~\ref{prop:Sn1_fidi}. The formula~\eqref{eq:prop_S1_conv_eq2} shows that~$F_n^{(1)} \limninf 0$ uniformly on compact sets.

  The finite-dimensional convergence~\eqref{eq:prop_S1_conv_eq1} guarantees that~\eqref{eq:tight_prop_eq1} and~\eqref{eq:tight_prop_eq2} of Lemma~\ref{lem:tightness} hold. This proves the tightness and thus establishes the weak convergence in~$D[0,L]$ equipped with the Skorohod topology. Because the limit is continuous (identically zero), this is equivalent to the weak convergence in~$D[0,L]$ with the \textit{uniform} topology. Since~$L>0$ is arbitrary, we find that
  \begin{equation}
    \label{eq:proof_thm1_sn1_dconv}
    S_n^{(1)} \limninfdist 0
  \end{equation}
  in~$D[0,+\infty)$ equipped with the locally uniform topology. We note that, because~$\limundninf F_n^{(1)}(t)$ has a jump at the end of the interval~$t = +\infty$, there is no hope for strengthening this convergence to make it hold in~$D[0,+\infty]$ with the \textit{uniform} topology (or the Skorohod topology, for that matter).
  \\[1ex]
  \noindent \textbf{b)} Now, we will deal with~$S_n^{(2)}$ and prove that
  \begin{equation}
    \label{eq:thm1_proof_s2_Dconv}
    \sqrt{n} \Big(S_n^{(2)} - \mexp{S_n^{(2)}} \Big) \limninf G
  \end{equation}
  in~$D[0,+\infty]$ with the uniform topology. Define 
  \begin{equation}
    F_n^{(2)}(t) = \frac{1}{n} \suml_{\theta_{n,j} > 1} \prob{U_{n,j} \le t}.
  \end{equation}
  By taking~$\phi = 1$ in Proposition~\ref{prop:fidi_conv} and using~\eqref{eq:fidi_prop_eq1}, we see that
  \begin{equation}
    F_n^{(2)} \limninf F
  \end{equation}
  uniformly on~$[0,+\infty]$, where
  \begin{equation}
	  F(t) = \kappa \intl_0^t \omega_1(x)\, dx,
  \end{equation}
  with~$\omega_1$ defined in~\eqref{eq:def_mk_omega}.  We again can use Lemma~\ref{lem:tightness} since the finite-dimensional convergence guarantees that~\eqref{eq:tight_prop_eq1} and~\eqref{eq:tight_prop_eq2} are satisfied. Indeed, the first condition follows from the tightness of the one-dimensional laws of~$X_{n}^{(2)}(t)$, which, in turn, follows from the one-dimensional convergence. The second conditions can be rephrased as the right-continuity of~$G$ at zero and its left-continuity at~$+\infty$ (both in probability). Since the covariance function of~$G$ is continuous on~$[0,+\infty] \times [0,+\infty]$, the conditions are fulfilled. As a result, we have the tightness in the Skorohod topology.

  Note that one has good control over the jumps of the process~$X_n^{(2)}$,
  \begin{equation}
    j(X_n^{(2)}) = \supl_{t \in [0,+\infty]}|\sqrt{n}\big(S_n^{(2)}(t) - S_n^{(2)}(t-0)\big)| \overset{\mathrm{a.s.}}{\le} \frac{M}{\sqrt{n}} \limninf 0,
  \end{equation}
  where~$M = \supl_{t \ge 0} |\phi(t)|$. Hence, we conclude that the limiting process~$G$ is a.s. bounded continuous (e.g., see P.~Billingsley~\cite[Theorem 13.4]{B1999book}) and thus~$X_n^{(2)}$ converges to~$G$ in distribution in~$D[0,+\infty]$ with the \textit{uniform} topology.

  Finally, Slutsky's theorem together with~\eqref{eq:proof_thm1_sn1_dconv} and~\eqref{eq:thm1_proof_s2_Dconv} implies~\eqref{eq:conv_skor_th_fclt1}. The formula~\eqref{eq:thm1_conv_of_first_moment} is supplied by~\eqref{eq:prop_S1_conv_eq2} and~\eqref{eq:fidi_prop_eq1}. The theorem is proven.
\end{proof}

\section{Functional central limit theorem for the first hitting time}
\label{sec:FCLT_hitting_time}
Recall the definition of the first-hitting time~$Q_n$ in~\eqref{eq:Qn_def}. The main result of this section is the following theorem.
\begin{theorem}[Functional CLT for the first-hitting time]
  \label{thm:thm2}
  In notation of Theorem~\ref{thm:thm1}, let~$\phi>0$ be a bounded function with a locally bounded first derivative. Set~$L = m_1(+\infty)$, where~$m_1$ is defined in~\eqref{eq:def_mk_omega}, and~$\tau = m_1^{-1}$. Then,
  \begin{equation}
    \label{eq:thm2_claim}
    \sqrt{n}\left(
      \begin{bmatrix}
        S_n\\
        Q_n
      \end{bmatrix}
      -
      \begin{bmatrix}
        m_1\\
        \tau
      \end{bmatrix}    
    \right)\limninfdist
    \begin{bmatrix}
      G\\
      -\tau' \cdot G \circ \tau
    \end{bmatrix}
  \end{equation}
  in the sense of convergence in distribution of random elements of~$D[0, +\infty) \times D[0,L)$ equipped with the product of the corresponding locally uniform topologies. 
\end{theorem}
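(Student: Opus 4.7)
The plan is to derive Theorem~\ref{thm:thm2} from Corollary~\ref{cor:thm1} by treating $Q_n$ as the generalized right-continuous inverse of the non-decreasing process $S_n$ and then applying a functional delta-method argument in the spirit of G.~R.~Shorack. Since $\phi > 0$ and $\omega_1 > 0$ on $(0, +\infty)$, the map $m_1$ is strictly increasing from $[0, +\infty)$ onto $[0, L)$ with continuous positive derivative $m_1'(t) = \kappa \phi(t) \omega_1(t)$; consequently $\tau = m_1^{-1}$ is $C^1$ on $[0, L)$ with $\tau'(h) = 1/m_1'(\tau(h))$, which makes the right-hand side of~\eqref{eq:thm2_claim} well-defined and continuous in $h$.

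The first key step is to invoke Skorohod's almost sure representation theorem, in the form applicable to $D$-spaces equipped with the non-separable locally uniform topology. Because the limit process $G$ of Corollary~\ref{cor:thm1} is a.s.\ continuous and takes values in the separable subspace $C[0, +\infty) \subset D[0, +\infty)$, one obtains on a new probability space versions (still denoted $S_n$ and $G$) such that $\sqrt{n}(S_n - m_1) \to G$ almost surely, uniformly on every compact subset of $[0, +\infty)$. In particular, by Slutsky together with Remark~\ref{rem:rem1} one also has $S_n \to m_1$ almost surely, uniformly on compacts.

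The second step is to establish the preliminary uniform convergence $Q_n \to \tau$ a.s.\ on compact subsets of $[0, L)$. Fix such a compact $K$ and let $T$ be any point of $[0, +\infty)$ with $\tau(K) \subset [0, T]$. From the a.s.\ uniform convergence of $S_n$ to the strictly increasing continuous $m_1$ on $[0, T]$, one deduces by a standard monotonicity argument (e.g.\ Lemma~1 in G.~R.~Shorack~\cite{S1973}) that the right-continuous inverses converge uniformly, $\sup_{h \in K} |Q_n(h) - \tau(h)| \to 0$ a.s. The jumps of $S_n$ being a.s.\ bounded by $\|\phi\|_\infty / n$ give the sharper identity $S_n(Q_n(h)) = h + O(1/n)$ uniformly in $h$, where the $O(1/n)$-term is non-random.

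The third step is the linearization. Writing $S_n(Q_n(h)) - m_1(\tau(h)) = O(1/n)$ and decomposing
\[
(S_n - m_1)(Q_n(h)) + \bigl(m_1(Q_n(h)) - m_1(\tau(h))\bigr) = O(1/n),
\]
the mean value theorem produces an intermediate point $\xi_n(h)$ between $Q_n(h)$ and $\tau(h)$ with
\[
\sqrt{n}(Q_n(h) - \tau(h)) = -\frac{\sqrt{n}\,(S_n - m_1)(Q_n(h))}{m_1'(\xi_n(h))} + O(1/\sqrt{n}).
\]
Since $Q_n, \xi_n \to \tau$ uniformly on $K$, since $m_1'$ is continuous and strictly positive on the corresponding compact range, and since $\sqrt{n}(S_n - m_1) \to G$ a.s.\ uniformly on $[0, T]$ with $G$ a.s.\ continuous, the right-hand side converges a.s.\ uniformly on $K$ to $-G(\tau(h))/m_1'(\tau(h)) = -\tau'(h)\,G(\tau(h))$. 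As $K \subset [0, L)$ was arbitrary, we obtain $\sqrt{n}(Q_n - \tau) \to -\tau' \cdot G \circ \tau$ a.s.\ in $D[0, L)$ under the locally uniform topology. Joint a.s.\ convergence of the two coordinates on the Skorohod space transfers to convergence in distribution on the original space in the product topology, which is~\eqref{eq:thm2_claim}.

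The main obstacle is Step~3: controlling the linearization uniformly on compact subsets of $[0, L)$ despite $S_n$ having jumps and despite $Q_n(h)$ depending on $h$, so that the composition $(S_n - m_1) \circ Q_n$ can legitimately be replaced by $G \circ \tau$ in the limit. This rests on the a.s.\ continuity of the Gaussian limit $G$ (inherited from Theorem~\ref{thm:thm1}), which allows composition with the uniformly convergent $Q_n$ to pass to the limit, and on the strict positivity of $m_1'$ along $\tau(K)$, which prevents the denominator from degenerating.
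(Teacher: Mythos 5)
Your proposal is correct and rests on the same two pillars as the paper's proof: a Skorohod almost-sure representation applied to Corollary~\ref{cor:thm1} (legitimate here because the limit $G$ is a.s.\ continuous, hence concentrated on a separable subspace), and a Shorack-style inverse-process argument in which the key quantitative inputs are the jump bound $0 \le S_n(Q_n(h)) - h \le \sup\phi / n$ and the a.s.\ locally uniform convergence $Q_n \to \tau$. Where you genuinely diverge is the linearization step. The paper first passes to the \emph{reduced} processes $\widetilde{S}_n = S_n\circ\tau$ and $\widetilde{Q}_n = m_1\circ Q_n^-$ (working with the left-continuous inverse), proves $\|\sqrt{n}(\widetilde{Q}_n-\id)+G\circ\tau\|_\epsilon \to 0$ a.s.\ via a triangle-inequality decomposition and a reflection-about-the-diagonal identity, and only then transfers back to $Q_n^-$ through the difference-quotient function $H_n$ of~\eqref{eq:clt_hitting_time_proof_Hn_def}, controlled by a second-order mean value theorem that uses $\tau \in C^2$ on $[0,L-\epsilon]$ (this is where $\phi'$ enters that step). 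You instead work directly with $Q_n$, use $h \le S_n(Q_n(h)) \le h + M/n$, and apply a first-order mean value theorem to $m_1(Q_n(h)) - m_1(\tau(h))$ with an intermediate point $\xi_n(h)$, so that only continuity and strict positivity of $m_1'$ on the relevant compacts are needed in the transfer; the composition $(S_n-m_1)\circ Q_n$ is handled, exactly as in the paper, by the a.s.\ uniform continuity of $G$ on compacts. Your route is thus somewhat more direct and needs less smoothness in the inversion step (the hypothesis that $\phi'$ is locally bounded is still required, but only through Corollary~\ref{cor:thm1}, which recenters $S_n$ at $m_1$), while the paper's route follows Shorack's reduced-process scheme more literally. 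Two small points you gloss over, neither fatal: you should note that $Q_n(h)<\infty$ and $Q_n(h)\le T$ eventually a.s.\ uniformly on the compact $K$ (which your Step~2 supplies) before invoking the identity $S_n(Q_n(h)) = h + O(1/n)$, and the $O(1/n)$ term is random but dominated by the deterministic bound $M/n$, which is what the argument actually uses.
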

\begin{remark}
  The theorem implies the convergence of finite-dimensional joint distributions. The law of large numbers for the second component also takes place,
  \begin{equation}
    Q_n \limninfprob \tau,
  \end{equation}
  uniformly on compacts in~$[0, L)$.
\end{remark}

The proof of Theorem~\ref{thm:thm2} relies heavily on Skorohod's almost sure representation theorem (e.g., see P.~Billingsley~\cite[Theorem~6.7]{B1999book}). We recall that, under mild conditions, Skorohod's theorem allows one to couple on the same probability space weakly convergent sequences of probability measures so that the convergence of the corresponding random elements becomes almost sure. First, we answer the question of what happens to~$Q_n(h)$ as~$n \to \infty$ if~$h \ge L$.
\begin{lemma}
  \label{lem:hitting_time_h_large}
  \begin{equation}
    Q_n(h) \limninfprob +\infty, \quad h \ge L.
  \end{equation}
\end{lemma}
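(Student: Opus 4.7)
The plan is to reduce everything to the one-dimensional law of large numbers already recorded in Remark~\ref{rem:rem1}, exploiting the fact that $\phi>0$ makes both $S_n$ and $m_1$ monotone.

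First, I would observe that since $\phi>0$, the process $S_n$ is a non-decreasing, right-continuous step function. Consequently, for every $T\ge 0$ and every $h\ge 0$,
\begin{equation*}
  \{Q_n(h)\le T\} \;=\; \{\exists\, s\in[0,T]:\, S_n(s)>h\} \;=\; \{S_n(T)>h\},
\end{equation*}
so that $Q_n(h)\to+\infty$ in probability is equivalent to $\prob{S_n(T)>h}\to 0$ for every fixed $T<+\infty$.

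Next, I would note that, under the assumption $\phi>0$, the mean function
\begin{equation*}
  m_1(t) \;=\; \kappa\intl_0^{t}\phi(x)\,\omega_1(x)\,dx
\end{equation*}
is strictly increasing on $[0,+\infty]$, because $\omega_1$ is a strictly positive density on $\mathbb{R}_+$. In particular, $m_1(T)<m_1(+\infty)=L$ for every finite $T$. Fix now $h\ge L$ and a finite $T>0$; setting $\delta_T\df L-m_1(T)>0$, we have
\begin{equation*}
  \prob{S_n(T)>h} \;\le\; \prob{S_n(T)>L} \;\le\; \prob{\,|S_n(T)-m_1(T)|>\delta_T\,}.
\end{equation*}
Remark~\ref{rem:rem1} (a direct consequence of Theorem~\ref{thm:thm1} via Slutsky) gives $S_n(T)\to m_1(T)$ in probability, so the right-hand side tends to zero as $n\to\infty$.

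Combining the two observations, $\prob{Q_n(h)\le T}\to 0$ for every $T<+\infty$, which is the definition of $Q_n(h)\limninfprob+\infty$. No serious obstacle is expected here: the main point is the elementary equivalence between events for $Q_n$ and $S_n$ coming from monotonicity, together with the strict inequality $m_1(T)<L$ that is built into the hypothesis $\phi>0$.
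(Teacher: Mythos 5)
Your proof is correct, but it takes a genuinely different and more elementary route than the paper. The paper invokes Skorohod's almost sure representation theorem to couple $X_n=\sqrt{n}(S_n-\mexp{S_n})$ with $G$ so that the convergence is almost sure uniformly on compacts, reduces to the case $h=L$ by monotonicity of $Q_n$ in $h$, and then argues by contradiction along a subsequence: $Q_{n_k}(L)\le T$ forces $S_{n_k}(T)\ge L$, and letting $k\to\infty$ gives $m_1(T)\ge L=m_1(+\infty)$, contradicting strict monotonicity of $m_1$. You instead stay entirely at the level of distributions: the event identity $\{Q_n(h)\le T\}=\{S_n(T)>h\}$ (valid because $S_n$ is a non-decreasing right-continuous step function; for a general c\`adl\`ag non-decreasing path one would only get the inclusion $\{Q_n(h)\le T\}\subseteq\{S_n(T)\ge h\}$, which still suffices), plus the one-point law of large numbers from Remark~\ref{rem:rem1} and the strict inequality $m_1(T)<L$ coming from $\phi>0$ and $\omega_1>0$. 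Both arguments hinge on exactly this last fact. What your version buys is brevity and the avoidance of the Skorohod coupling altogether (and with it the measurability subtleties of non-separable $D$-spaces), since convergence in probability to $+\infty$ depends only on the marginal laws of $Q_n(h)$; what the paper's version buys is that the almost sure statement $Q_n(L)\limninfas+\infty$ is obtained on the same coupled space that is used pathwise throughout the rest of Section~\ref{sec:FCLT_hitting_time}, so the lemma slots directly into the proof of Theorem~\ref{thm:thm2}. Note also that the paper's proof only uses boundedness and measurability of $\phi$ through Theorem~\ref{thm:thm1}, exactly as yours does, so neither argument needs the derivative hypothesis of Theorem~\ref{thm:thm2} for this particular lemma.
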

\begin{proof}
  We recall Theorem~\ref{thm:thm1}. Since for
  \begin{equation}
    X_n(t) = \sqrt{n} \big(S_n(t)- \mexp{S_n(t)}\big),\quad n \in \mathbb{N},
  \end{equation}
  we have the weak convergence~\eqref{eq:conv_skor_th_fclt1}, the Skorohod representation theorem allows us to couple on the same probability space the random elements~$\{X_n\}$ along with~$G$ so that
  \begin{equation}
    \label{eq:lem_hittingtime_proof_eq1}
    X_n \limninfas G
  \end{equation}
  uniformly on compacts. Again, we use the same symbols to denote the coupled objects.
  
  Since~$Q_n(h)$ is non-decreasing, it suffices to prove
  \begin{equation}
    \label{eq:hitting_time_h_large_proof_eq1}
    Q_n(L) \limninfas +\infty.
  \end{equation}
  Suppose that~\eqref{eq:hitting_time_h_large_proof_eq1} does not hold. Then, there is finite~$T>0$ and a subsequence of indices~$\{n_k\}$ such that
  \begin{equation}
    {Q}_{n_k}(L) \le T, \quad k=1,2,\ldots.
  \end{equation}
  By the definition~\eqref{eq:Qn_def} we have
  \begin{equation}
    {S}_{n_k}(T) \ge L.
  \end{equation}
  Passing to the limit~$k \to \infty$ and noticing that
  \begin{equation}
    {S}_n(T) \limninfas m_1(T),
  \end{equation}
  we obtain
  \begin{equation}
    m_1(T) \ge L = m_1(+\infty),
  \end{equation}
  which is impossible since~$m_1$ is \textit{strictly} increasing. This concludes the proof.
\end{proof}

Heuristically, if the level~$h$ is higher or equal to~$L$, then asymptotically~$(S_n(t),t \ge 0)$ either reaches it in infinite time or never at all.

\begin{proof}[Proof of Theorem~\ref{thm:thm2}]
  Following the same line of reasoning as in the proof of Lemma~\ref{lem:hitting_time_h_large}, however relying on Corollary~\ref{cor:thm1} instead of Theorem~\ref{thm:thm1}, we will have
  \begin{equation}
    \label{eq:thm_FCLT_Qn_eq0}
    \sqrt{n} (S_n- m_1) \limninfas G
  \end{equation}
  uniformly on compacts. By enriching the probability space, if necessary, we can assume that the~$S_n$ have sample paths as prescribed by the right-hand side of~\eqref{eq:S_n_via_U}, enabling us to use the pathwise arguments below.

  It is convenient to work with the left-continuous analog of~$Q_n$ defined by
  \begin{equation}
    Q_n^-(h) \df Q_n(h-0) = \inf \{s \in [0, +\infty)|\, S_n(s) \ge h \}.
  \end{equation}
  Set
  \begin{equation}
    \widetilde{Q}_n =  m_1\circ Q_n^-, \quad \widetilde{G} = G \circ \tau, \quad \widetilde{S}_n = S_n \circ \tau
  \end{equation}
  on~$[0, L)$. We have that for every~$\epsilon \in (0,L)$
  \begin{equation}
    \label{eq:thm_FCLT_Qn_eq1}
    \|\sqrt{n}\big(\widetilde{S}_n-\id\big)- \widetilde{G}\|_\epsilon \limninfas 0,
  \end{equation}
  where~$\|\cdot \|_\epsilon$ is the uniform norm over~$[0,L-\epsilon]$.
  \\[2ex]
  \noindent \textbf{a)} We are going to prove that for every~$\epsilon \in (0,L)$
  \begin{equation}
    \label{eq:thm_FCLT_Qn_eq3}
    \|\sqrt{n} \big(\widetilde{Q}_n - \id\big) + \widetilde{G}\|_{\epsilon} \limninfas 0.
  \end{equation}
  
  We proceed with a line of argument similar to that in~G.~R.~Shorack~\cite[p. 151]{S1973}, extending it to our scenario. The triangle inequality implies
  \begin{equation}
    \label{eq:thm_FCLT_Qn_eq2}
    \begin{aligned}
      &\|\sqrt{n} \big(\widetilde{Q}_n - \mathrm{id}\big) + \widetilde{G} \|_\epsilon \le \sqrt{n} \|\widetilde{S}_n \circ \widetilde{Q}_n - \id \|_\epsilon +\|\widetilde{G} \circ \widetilde{Q}_n -  \widetilde{G}\|_\epsilon \\ 
      &+\|\sqrt{n}(\widetilde{S}_n \circ \widetilde{Q}_n - \widetilde{Q}_n) -\widetilde{G} \circ \widetilde{Q}_n\|_\epsilon\\
      &\le \sqrt{n} \|\widetilde{S}_n \circ \widetilde{Q}_n - \id \|_\epsilon + \omega_{\widetilde{G}}(\|\widetilde{Q}_n -  \id\|_\epsilon) + \|\sqrt{n}(\widetilde{S}_n  - \id) -\widetilde{G}\|_{\widetilde{\varepsilon}},
    \end{aligned}
  \end{equation}
  where~$\widetilde{\varepsilon} = \widetilde{Q}_n(L-\epsilon)$ and $\omega_{\widetilde{G}}(\cdot)$ is the modulus of continuity of~$\widetilde{G}$.

  Since~$\widetilde{S}_n(L-\frac{\epsilon}{2}) \limninfas L - \frac{\epsilon}{2}$, for every realization and for all~$n$ large enough (depending on that realization), we will have~$\widetilde{S}_n(L-\frac{\epsilon}{2})>L-\epsilon$. Thus~$\widetilde{\varepsilon} \le L- \frac{\varepsilon}{2}$, and the third term of the last inequality converges to zero due to~\eqref{eq:thm_FCLT_Qn_eq1}. Also, $\|\widetilde{S}_n \circ \widetilde{Q}_n - \id \|_\epsilon$ is bounded by the maximal jump of~$S_n$, which is less or equal to~$M/n$. We get the bound
  \begin{equation}
    \sqrt{n} \big\|\widetilde{S}_n \circ \widetilde{Q}_n - \id \big\|_\epsilon \le \frac{M}{\sqrt{n}},
  \end{equation}
  where~$M = \supl_{x \ge 0}\phi(x)$. This allows us to make the first term on the right-hand side of the last inequality in~\eqref{eq:thm_FCLT_Qn_eq2} arbitrarily small by choosing~$n$ large.
  
  Since~$\widetilde{G}$ is uniformly continuous on~$[0,L-\epsilon]$ (in fact, on~$[0,L]$!), the last thing we need to prove is that~$\|\widetilde{Q}_n -  \id\|_\epsilon \limninf 0$ almost surely. By symmetry and since~$\widetilde{Q}_n$ is left-continuous,
  \begin{equation}
    \label{eq:proof_th2_eq1}
    \supl_{h \in [0, \widetilde{S}_n(L-\epsilon/2)]}|\widetilde{Q}_n(h) -  h| = \supl_{s \in [0, \widetilde{Q}_n \circ \widetilde{S}_n(L-\epsilon/2)]}|\widetilde{S}_n(s) -  s|.
  \end{equation}
  The graphs of~$\widetilde{S}_n$ and~$\widetilde{Q}_n$ over the specified intervals are the reflections of each other about the diagonal, which explains~\eqref{eq:proof_th2_eq1}. Since~$\widetilde{S}_n(L-\epsilon/2) > L - \epsilon$, the identity~\eqref{eq:proof_th2_eq1} implies
  \begin{equation}
    \|\widetilde{Q}_n -  \id\|_\epsilon \le \supl_{s \in [0, \widetilde{Q}_n \circ \widetilde{S}_n(L-\epsilon/2)]}|\widetilde{S}_n(s) -  s| \le \|\widetilde{S}_n -  \id\|_{\epsilon/2}.
  \end{equation}
  From~\eqref{eq:thm_FCLT_Qn_eq1} we see that the latter term converges to zero almost surely, so the convergence~\eqref{eq:thm_FCLT_Qn_eq3} is established.
  \\[2ex]
  \noindent \textbf{b)} In order to return to the original process~$Q_n$ (without tilde), we need to establish 
  \begin{equation}
    \label{eq:thm_FCLT_Qn_eq4}
    \| \sqrt{n}\big(Q_n^- - \tau\big) + \tau' \cdot \widetilde{G}\|_\epsilon \limninfas 0.
  \end{equation}
  To that end, write
  \begin{equation}
    \| \sqrt{n}\big(Q_n^- - \tau\big) + \tau' \cdot \widetilde{G}\|_\epsilon =  \| H_n \cdot \sqrt{n}(\widetilde{Q}_n - \id) + \tau' \cdot \widetilde{G}\|_\epsilon,
  \end{equation}
  where~$H_n$ is the function defined by
  \begin{equation}
    \label{eq:clt_hitting_time_proof_Hn_def}
    H_n(h) = 
    \begin{cases}
      \frac{\tau\circ \widetilde{Q}_n(h)-\tau(h)}{\widetilde{Q}_n(h) - h},  &\widetilde{Q}_n(h) - h \ne 0,\\
      \tau'(h), &\widetilde{Q}_n(h) - h = 0.
    \end{cases}
  \end{equation}
  Rewrite
  \begin{equation}
    \sqrt{n}\big(Q_n^- - \tau\big) + \tau' \cdot \widetilde{G} =(H_n - \tau' )\cdot \sqrt{n}(\widetilde{Q}_n - \id) + \tau'  \cdot (\sqrt{n}(\widetilde{Q}_n - \id)+\widetilde{G}).
  \end{equation}
  Since~$\tau'$ is bounded and~$\sqrt{n}(\widetilde{Q}_n - \id)$ is a.s. convergent by a) and thus also bounded, the triangle inequality yields
  \begin{equation}
    \|\sqrt{n}\big(Q_n^- - \tau\big) + \tau' \cdot \widetilde{G}\|_\epsilon \le C \left(\|H_n -\tau'\|_\epsilon +\|\sqrt{n}(\widetilde{Q}_n - \id)+\widetilde{G} \|_\epsilon\right),
  \end{equation}
  for some constants~$C>0$. The second term converges to zero almost surely by a).

  The function~$\tau$ is twice continuously differentiable on~$[0,L-\epsilon]$ (but not on~$[0,L]$!). The Lagrange mean-value theorem of the second order applied to~$\tau$ in~\eqref{eq:clt_hitting_time_proof_Hn_def} implies
  \begin{equation}
    \|H_n -\tau'\|_\epsilon  \le \widetilde{C} \|\widetilde{Q}_n-\id\|_\epsilon
  \end{equation}
  for some constant~$\widetilde{C}>0$ independent of~$n$.
  The last quantity converges to zero almost surely as was established in a).

  Finally, by the left-continuity, in~\eqref{eq:thm_FCLT_Qn_eq4} we can take the supremum over~$[0,L-\varepsilon)$, which is the same for the original process~$Q_n$. Thus,
  \begin{equation}
    \supl_{h \in [0, L-\varepsilon)} |\sqrt{n}\big(Q_n - \tau\big) + \tau' \cdot \widetilde{G}| \limninfas 0,
  \end{equation}
  and the locally uniform convergence on~$[0,L)$ follows. Since this holds on the same probability space where~\eqref{eq:thm_FCLT_Qn_eq0} does, the \textit{vector} convergence in~\eqref{eq:thm2_claim} is established.
\end{proof}

\begin{acks}
	The author thanks Gernot Akemann, who suggested Remark~\ref{rem:Gernot}, as well as the anonymous referees, whose feedback and additional references helped to improve the presentation of the paper. The work was supported by the Research Foundation -- Flanders (FWO), project 12K1823N.
\end{acks}

\end{document}